\documentclass[12pt,oneside]{article}
\usepackage{amsmath,amssymb,amsfonts,amsthm}
\usepackage{color}
\textheight = 9.5in            
\textwidth = 6.25in \leftmargin=1.25in \rightmargin=1.25in
\topmargin=0.75in
\parindent=0.3in
\hoffset -1.3truecm \voffset -3truecm


\newcommand{\T}{{\cal T}}

\newcommand{\Real}{\mathbb R}

\newcommand{\set}[1]{\left\{#1\right\}}

\def\ov#1{\overline{#1}}

\setlength\arraycolsep{2pt}    

\numberwithin{equation}{section} 
\numberwithin{figure}{section} 

\newtheorem{thm}{Theorem}[section]
\newtheorem{lem}[thm]{Lemma}
\newtheorem{prop}[thm]{Proposition}
\newtheorem{cor}[thm]{Corollary}
\theoremstyle{definition}
\newtheorem{defn}[thm]{Definition}

\theoremstyle{remark}
\newtheorem{example}{Example}
\newtheorem{rem}[thm]{Remark}
\newtheorem*{acknowledgement*}{Acknowledgement}

\newcommand\undersym[2]{\raisebox{-7pt}{\tiny$#2$}{\kern-8pt}\mbox{$#1$}}
\newcommand\undersymm[2]{\raisebox{-8pt}{\tiny$#2$}{\kern-15pt}\mbox{$#1$}}
\newcommand\overast[1]{\raisebox{9pt}{\small$\ast$}{\kern-9pt}\mbox{$#1$}}
\newcommand\overlind[1]{\raisebox{10pt}{\small$\overline{{\hspace{2pt}}\star}$}{\kern-7.5pt}\mbox{$#1$}}
\newcommand\overlinc[1]{\raisebox{10pt}{\tiny$\overline{{\hspace{2pt}}\circ}$}{\kern-7.5pt}\mbox{$#1$}}
\newcommand\overlina[1]{\raisebox{10pt}{\small$\overline{{\hspace{1pt}}\ast}$}{\kern-7.5pt}\mbox{$#1$}}
\newcommand\overcirc[1]{\raisebox{10pt}{\tiny{$\circ$}}{\kern-7.5pt}\mbox{$#1$}}
\newcommand\overdiamond[1]{\raisebox{10pt}{\small$\star$}{\kern-7.5pt}\mbox{$#1$}}

\newcommand\tovercirc[1]{\raisebox{5pt}{\tiny{$\circ$}}{\kern-5.5pt}\mbox{$#1$}}
\newcommand\toverdiamond[1]{\raisebox{5pt}{\tiny$\star$}{\kern-5.5pt}\mbox{$#1$}}
\newcommand\toverast[1]{\raisebox{5pt}{\tiny$\ast$}{\kern-5pt}\mbox{$#1$}}

\begin{document}
\title{\bf Coordinate-free study of Finsler spaces of $H_{p}$-scalar curvature}
\author{{\bf  A. Soleiman and S. G. Elgendi}}
\date{}

\maketitle                     
\vspace{-1.15cm}

\begin{center}
{Department of Mathematics, Faculty of Science,\\ Benha University,   Egypt}
\end{center}
\vspace{-0.8cm}
\begin{center}
E-mails: amr.hassan@fsci.bu.edu.eg, amrsoleiman@yahoo.com\\
{\hspace{1.2cm}}salah.ali@fsci.bu.edu.eg, salahelgendi@yahoo.com
\end{center}
\smallskip

 \maketitle

\bigskip

\begin{center}
\textit{Dedicated to Professor Nabil Youssef on the occasion of his 70th birthday
}\end{center}

\bigskip

\noindent{\bf Abstract.}
  The aim of the present paper is to
provide an \emph{intrinsic} investigation of special  Finsler
spaces of $H_{p}$-scalar curvature and of $H_{p}\,$-constant
curvature. Characterizations of such spaces are shown. Sufficient condition  for Finsler space of $H_{p}$-scalar curvature
to be  of perpendicular scalar curvature is investigated. Necessary and sufficient condition under which a Finsler space of scalar curvature turns into a Finsler space of $H_{p}$-scalar
curvature is given. Further, certain conditions under which a
Finsler manifolds of $H_{p}$-scalar curvature and of scalar
curvature reduce to a Finsler manifold of $H_{p}$-constant
curvature  are obtained. Finally, various  examples  are studied and constructed.

\bigskip
\medskip\noindent{\bf Keywords.\/}\, Berwald connection;  $H_{p}$-scalar curvature;  $H_{p}$-constant
curvature; scalar curvature;   constant curvature; projection operator.

\bigskip
\medskip\noindent{\bf 2000 AMS Subject Classification.\/} 53C60,
53B40
\bigskip

\newpage
\vspace{30truept}\centerline{\Large\bf{Introduction}}\vspace{12pt}
\par

In Riemannian geometry, the study of  the Riemannian manifolds of scalar curvature was very fruitful. It has been contributed  in classifying lots of Riemannian manifolds. For example, the  special manifolds of constant  curvature $-1, 0, +1$. The concept of scalar curvature was extended to Finsler geometry. Most of the special  spaces  are derived from the
fact that the tensor fields (torsions and curvatures)
associated with a linear connection  can be given in
special forms. In Riemannian geometry there exist a unique linear connection, that is, Levi-Civita connection. But in Finsler geometry there are lots of linear connections, for example, Cartan connection , Berwald connection,  ...etc.
 Consequently, the special Finsler spaces are
more numerous than those of Riemannian geometry. Special Finsler
spaces are investigated locally (using local coordinates) by many
authors, see for example (\cite{r14}, \cite{r75}, \cite{yos.2},
\cite{r29}, \cite{r34}, \cite{r42}, \cite{sak.}, \cite{yos.1}). On
the other hand, the global (or intrinsic, free from local
coordinates) investigation of such spaces is very rare in the
literature. Some considerable contributions in this direction are
  \cite{r48}, \cite{sca.} and \cite{r86}.
  \par
In a recent paper \cite{sca.},   some
characterizations of a Finsler space of scalar
curvature are  investigated. Also,   necessary and sufficient conditions
under which a Finsler space of scalar curvature reduces to a
Finsler space of constant curvature are shown.

\par
The present paper is a continuation of ~\cite{sca.}  and \cite{r86}, where we provide an \emph{intrinsic} investigation
of some important special Finsler manifolds related to the Berwald
curvature tensors  namely, Finsler manifold of $H_{p}$-scalar
curvature and of $H_{p}$-constant curvature.

In \cite{yos.1},  Yoshida introduced, locally, the notion of Finsler space of $H_p$-scalar curvature. In this paper, we study, in a coordinate-free fashion, the Finsler spaces of $H_p$-scalar curvature and $H_{p}$-constant curvature. We give a characterization for any Finsler manifold to be of $H_p$-scalar curvature. We find a sufficient  condition under which a Finsler space of $H_p$-scalar curvature is of perpendicular scalar curvature.

Section 3 is devoted to focus on the Finsler spaces of scalar curvature and constant curvature. Necessary and sufficient condition under which a Finsler space of scalar curvature is of $H_p$-scalar curvature is given. We show that every Finsler space of constant curvature is of $H_p$-scalar curvature. The converse is true only in some specific cases, for example, see Theorem 3.6. But, generally, not every Finsler space of $H_p$-constant curvature is of Constant curvature, see Section 4 (Example 6).

In Section 4, we study various examples. Some of these examples are mentioned in the literature, but in different contexts. And some examples are constructed, for instance Example 6.

It should finally be noted that the present work is formulated in a
prospective modern coordinate-free form.
  Moreover,
 the outcome of this work is twofold.
Firstly, the local expressions of the obtained results, when
calculated, coincide with the existing local results. Secondly, new
coordinates-free proofs have been established.

\section{Notations and Preliminaries}

In this section, we give a brief account of the basic concepts
 of the pullback approach to intrinsic Finsler geometry necessary for this work. For more
 details, we refer to \cite{r58, amr3, r48}.

Throughout, $M$ is a smooth manifold of finite dimension $n$. The $\Real$-algebra of smooth real-valued functions on $M$ is denoted by $C^\infty(M)$;  $\mathfrak{X}(M)$ stands for the $C^\infty(M)$-module of vector fields on $M$. The tangent bundle of  $M$ is $\pi_{M}:TM\longrightarrow M$, the subbundle of nonzero tangent vectors
 to $M$ is $\pi: \T M\longrightarrow M$. The vertical subbundle of $TTM$  is denoted by $V(TM)$. The pull-back of  $TM$ over  $\pi$ is $P:\pi^{-1}(TM)\longrightarrow \T M$.

Elements  of  $\mathfrak{X}(\pi (M))$ will be called
$\pi$-vector fields and will be denoted by barred letters
$\overline{X} $. Tensor fields on $\pi^{-1}(TM)$ will be called
$\pi$-tensor fields. The fundamental $\pi$-vector field is the
$\pi$-vector field $\overline{\eta}$ defined by
$\overline{\eta}(u)=(u,u)$ for all $u\in \T M$.

 We have the following short exact sequence of vector bundle morphisms:
\vspace{-0.1cm}
$$0\longrightarrow
 \pi^{-1}(TM)\stackrel{\gamma}\longrightarrow T(\T M)\stackrel{\rho}\longrightarrow
\pi^{-1}(TM)\longrightarrow 0.$$
 Here $\rho := (\pi_{\T M},\pi_\ast)$,  and $\gamma$ is defined by  $\gamma (u,v):=j_{u}(v)$, where
$j_{u}$  is the canonical  isomorphism  from $T_{\pi_{M}(v)}M$ onto $ T_{u}(T_{\pi_{M}(v)}M)$.
Then,   $J:=\gamma\circ\rho$ is a vector $1$-form  on $TM$  called the vertical endomorphism. The Liouville vector field
on $TM$ is the vector field defined by
${C}:=\gamma\circ\overline{\eta},\,\, \overline{\eta}(u)=(u,u),\, u\in TM.$

Let $D$ be  a linear connection (or simply a connection) on the
pullback bundle $\pi^{-1}(TM)$.
 The connection (or the deflection) map associated with $D$ is defined by \vspace{-0.1cm}
$$K:T \T M\longrightarrow \pi^{-1}(TM):X\longmapsto D_X \overline{\eta}
.\vspace{-0.1cm}$$ A tangent vector $X\in T_u (\T M)$ at $u\in \T M$
is horizontal if $K(X)=0$ . The vector space $H_u (\T M)= \{ X \in
T_u (\T M) : K(X)=0 \}$ is called the horizontal space at $u$.
   The connection $D$ is said to be regular if
\begin{equation*}\label{direct sum}
T_u (\T M)=V_u (\T M)\oplus H_u (\T M) \qquad \forall u\in \T M .
\end{equation*}
   Let $\beta:=(\rho |_{H(\T M)})^{-1}$, called the horizontal map of the connection
$D$,  then \vspace{-0.1cm}
   \begin{align*}\label{fh1}
    \rho\circ\beta = id_{\pi^{-1} (TM)}, \quad  \quad
       \beta\circ\rho =   id_{H(\T M)} & {\,\, \text{on}\,\,   H(\T M)}.\vspace{-0.2cm}
\end{align*}

For a regular connection $D$, the horizontal covariant derivative $\stackrel{h}D$ and the vertical covariant
derivatives   $\stackrel{v}D$ are defined, for a
vector (1)$\pi$-form $A$, for example, by \vspace{-0.2cm}
 $$ (\stackrel{h}D A)(\overline{X}, \overline{Y}):=
  (D_{\beta \overline{ X}} A)( \overline{Y}), \quad
  (\stackrel{v}D A)( \overline{X},  \overline{Y}):= (D_{\gamma \overline{X}} A)(  \overline{Y}).$$
\par
 The (classical)  torsion tensor $\textbf{T}$ (resp. the curvature tensor $ \textbf{K}$)  of the connection
$D$ are given by
$$\textbf{T}(X,Y)=D_X \rho Y-D_Y\rho X -\rho [X,Y] ,$$
 $$ \textbf{K}(X,Y)\rho Z=-D_X D_Y \rho Z+D_Y D_X \rho Z+D_{[X,Y]}\rho Z ,$$
 for all $X,Y, Z \in \mathfrak{X} (\T M)$.
 The horizontal
((h)h-) and mixed ((h)hv-) torsion tensors are defined respectively
by \vspace{-0.2cm}
$$Q (\overline{X},\overline{Y}):=\textbf{T}(\beta \overline{X},\beta \overline{Y}),
\, \,\,\,\, T(\overline{X},\overline{Y}):=\textbf{T}(\gamma
\overline{X},\beta \overline{Y}) \quad \forall \,
\overline{X},\overline{Y}\in\mathfrak{X} (\pi (M)).\vspace{-0.2cm}$$
and the horizontal (h-), mixed (hv-) and vertical (v-)
curvature tensors are defined respectively by
$$R(\overline{X},\overline{Y})\overline{Z}:=\textbf{K}(\beta
\overline{X}, \beta \overline{Y})\overline{Z},\quad
P(\overline{X},\overline{Y})\overline{Z}:=\textbf{K}(\beta
\overline{X},\gamma \overline{Y})\overline{Z},\quad
S(\overline{X},\overline{Y})\overline{Z}:=\textbf{K}(\gamma
\overline{X},\gamma \overline{Y})\overline{Z}.$$ The
 (v)h-, (v)hv- and (v)v-torsion tensors  are defined respectively by
$$\widehat{R}(\overline{X},\overline{Y}):={R}(\overline{X},\overline{Y})\overline{\eta},\quad
\widehat{P}(\overline{X},\overline{Y}):={P}(\overline{X},\overline{Y})\overline{\eta},\quad
\widehat{S}(\overline{X},\overline{Y}):={S}(\overline{X},\overline{Y})\overline{\eta}.$$
\par
For a Finsler manifold $(M,L)$, we have  the Berwald connection $D^{\circ} $  on the pullback bundle.

\begin{thm}{\em\cite{r92}} \label{th.1a} Let $(M,L)$ be a Finsler manifold. There exists a
unique regular connection ${{D}}^{\circ}$ on $\pi^{-1}(TM)$ such
that
\begin{description}
 \item[(a)] $D^{\circ}_{h^{\circ}X}L=0$,
  \item[(b)]   ${{D}}^{\circ}$ is torsion-free\,{\em:} ${\textbf{T}}^{\circ}=0 $,
  \item[(c)]The (v)hv-torsion tensor $\widehat{P^{\circ}}$ of ${D}^{\circ}$ vanishes\,\emph{:}
   $\widehat{P^{\circ}}(\overline{X},\overline{Y})= 0$.
  \end{description}
  \par Such a connection is called the Berwald
  connection associated with the Finsler manifold $(M,L)$.
\end{thm}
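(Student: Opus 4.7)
The plan is to establish uniqueness first, which simultaneously forces explicit formulas for $D^{\circ}$, and then verify existence by constructing such a connection from the canonical geodesic spray of $L$.

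For uniqueness, I would suppose $D^{\circ}$ is a regular connection satisfying (a)--(c), with associated horizontal map $\beta$, connection map $K$, and horizontal projector $h^{\circ} = \beta \circ \rho$. Any $X \in \mathfrak{X}(\T M)$ then decomposes as $X = \beta\rho X + \gamma K X$. Feeding the torsion-free condition (b) into the definition of $\textbf{T}^{\circ}$ with $X = \beta\overline{X}$ and $Y = \gamma\overline{Y}$, and using $\rho\circ\gamma = 0$, $\rho\circ\beta = \text{id}$, yields the mixed identity
\begin{equation*}
D^{\circ}_{\gamma\overline{Y}}\overline{X} = -\rho[\beta\overline{X},\, \gamma\overline{Y}].
\end{equation*}
The purely horizontal instance of (b) gives analogously
\begin{equation*}
D^{\circ}_{\beta\overline{X}}\overline{Y} - D^{\circ}_{\beta\overline{Y}}\overline{X} = \rho[\beta\overline{X},\, \beta\overline{Y}].
\end{equation*}
Both identities express $D^{\circ}$ purely in terms of $\beta$, so uniqueness of $D^{\circ}$ reduces to uniqueness of the horizontal map $\beta$. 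To pin $\beta$ down, I would exploit condition (a): horizontal parallelism of $L$ forces $\beta\overline{\eta}$ to coincide with the canonical spray of $L$ (uniquely determined by $L$ through the symplectic form $dd_J E$ where $E = L^{2}/2$). Condition (c), the vanishing of $\widehat{P^{\circ}}$, then propagates this determination from $\overline{\eta}$ to arbitrary $\pi$-vector fields through a homogeneity argument, completing uniqueness.

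For existence, the strategy is to reverse the above. Start with the canonical spray $S$ of $L$ and form Grifone's horizontal endomorphism $h^{\circ} = \tfrac{1}{2}(\text{id}_{T(\T M)} + [J,S])$ via the Fr\"{o}licher--Nijenhuis bracket. From this extract $\beta^{\circ}$, then define $D^{\circ}$ by the two identities displayed above (completed in the symmetric, torsion-free way they prescribe). A direct verification---using the homogeneity of $L$, the defining equation of $S$, and the standard identities for brackets involving $J$---shows that the resulting $D^{\circ}$ is a regular linear connection on $\pi^{-1}(TM)$ satisfying (a)--(c).

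The main obstacle I anticipate is establishing the link between condition (a) and the canonical spray: namely, that the horizontal distribution selected by $D^{\circ}_{h^{\circ}X}L = 0$ is necessarily Grifone's. This equivalence is the technical heart of the argument and rests on comparing the horizontal-energy equation $(\beta\overline{\eta})\cdot E = 0$ with the Euler--Lagrange characterization $i_S\, dd_J E = -dE$ of the geodesic spray. Once this comparison is made rigorous, the remainder of the proof---verifying that the constructed $D^{\circ}$ satisfies (b) and (c)---reduces to careful bookkeeping with the bracket identities for $\gamma$, $\beta$ and $J$.
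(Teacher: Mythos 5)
First, a point of reference: the paper does not prove Theorem \ref{th.1a} at all --- it is imported verbatim from \cite{r92} --- so your attempt can only be measured against the proof given there, which does proceed along the lines you sketch: reduce the linear connection to its underlying nonlinear connection (horizontal map), identify that nonlinear connection with the Barthel connection of the canonical spray, and reverse the construction for existence. Your overall architecture is therefore the right one.

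There is, however, a genuine gap in your uniqueness step. Your two displayed identities do \emph{not} ``express $D^{\circ}$ purely in terms of $\beta$'': the purely horizontal instance of (b) only determines the antisymmetric combination $D^{\circ}_{\beta\overline{X}}\overline{Y}-D^{\circ}_{\beta\overline{Y}}\overline{X}$, so at that stage the horizontal covariant derivative is still free. What pins it down is condition (c), but not in the role you assign it (``propagating the determination from $\overline{\eta}$ by homogeneity''). With the paper's sign convention for $\textbf{K}$ and the facts $D^{\circ}_{X}\overline{\eta}=K(X)$, $K\circ\beta=0$, $K\circ\gamma=\operatorname{id}$, one computes
\begin{equation*}
\widehat{P^{\circ}}(\overline{X},\overline{Y})=\textbf{K}(\beta\overline{X},\gamma\overline{Y})\overline{\eta}
=-D^{\circ}_{\beta\overline{X}}\overline{Y}+K\bigl([\beta\overline{X},\gamma\overline{Y}]\bigr),
\end{equation*}
so (c) is exactly the closed formula $D^{\circ}_{\beta\overline{X}}\overline{Y}=K[\beta\overline{X},\gamma\overline{Y}]$ for \emph{all} $\overline{X},\overline{Y}$; together with your vertical identity this reduces uniqueness of $D^{\circ}$ to uniqueness of the horizontal map. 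Only the special case $\overline{Y}=\overline{\eta}$ of this formula encodes homogeneity of the nonlinear connection, which you then combine with $dL\circ\beta=0$ (condition (a)) and the vanishing of the weak torsion coming from the horizontal part of (b) to invoke Grifone's uniqueness theorem for the Barthel connection. Without the displayed identification the uniqueness argument does not close; once it is inserted, the remainder of your plan --- including building existence from the canonical spray $S$ and $h^{\circ}=\tfrac{1}{2}(\operatorname{id}+[J,S])$, and verifying (a)--(c) by bracket bookkeeping --- is essentially the proof of \cite{r92}.
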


\section{Finsler Spaces of $H_{p}$-scalar curvature}

In this section, we  investigate (intrinsically)  some important
special Finsler spaces related to the Berwald curvature tensors
namely, Finsler spaces of $H_{p}$-scalar curvature and of $H_{p}$-
constant curvature. Characterizations of such spaces are obtained. Relation between Finsler spaces of $H_{p}$-scalar curvature
and of perpendicular scalar curvature is investigated.

\bigskip
Throughout, for given Finsler manifold $(M,L)$, $g$   denotes
 the Finsler metric on $\pi^{-1}(TM)$  and $\nabla$ denotes the Cartan connection. Also, $T$ stands for the  Cartan tensor, ${R}$ and $\widehat{{R}}$ for   the $h$-curvature and $(v)h$-torsion of Cartan connection,
${\overcirc{R}}$ and $\widehat{\overcirc{R}}$ for   the $h$-curvature and $(v)h$-torsion of Berwald connection, and $H:=i_{\ov \eta}\,\,\widehat{\overcirc{R}}$ for
 the deviation tensor. Moreover,
$\ell:=L^{-1}i_{\overline{\eta}}g$, $\phi(\overline{X}):=\overline{X}-L^{-1}\ell(\ov X)\ov \eta$ and  $\hbar(\overline{X},\ov Y):=g(\phi(\overline{X}),\ov Y)=g(\overline{X},\ov 
Y)-\ell(\overline{X}) \ell(\ov Y)$ the angular metric tensor. Finally,  $\stackrel{h}{D^{\circ}}$  and $\stackrel{v}{D^{\circ}}$  will denote respectively the horizontal covariant derivative and the vertical covariant derivative associated with $D^{\circ}$.

\bigskip

 We begin with the following definitions and results of \cite{sca.} which are useful for subsequent use.
\begin{defn}\label{sca.}{\em{\cite{r86}}} A Finsler manifold $(M,L)$ of dimension $n\geq 3$ is
called  of scalar curvature  $k$  if the deviation tensor $H$
 satisfies
 $$H(\overline{X})=
  k L^{2} \phi(\overline{X}), $$
  where $k(x,y)\in C^\infty(\T M)$ is a positively homogenous of degree zero in $y$ ($x\in M$ and $y\in T_xM$).
   Especially, if the scalar curvature $k$ is constant, then $(M,L)$ is called a Finsler manifold of constant curvature.
\end{defn}

\begin{defn}\label{ind.}\cite{r86} Let $\mathcal{P}$ be the projection operator of indicatrix (or simply, projection operator) and
$\phi(\overline{X}):=\overline{X}-L^{-1}\ell(\overline{X})\ov \eta$. If $\omega$ is a
$\pi$-tensor field  of type {\em(1,p)},
    then $\mathcal{P}\cdot \omega$ is a $\pi$-tensor field  of the same type
    defined by{\,\em:}\vspace{-0.3cm}
\begin{equation*}\label{eq.i2}
    (\mathcal{P}\cdot \omega)(\overline{X}_{1},..., \overline{X}_{p}):= \phi(\omega(\phi(\overline{X}_{1}),..., \phi(\ov
    X_{p}))).
\end{equation*}
   Similarly, if $\omega$ is a $\pi$-tensor field  of type {\em(0,p)},
    then $ {\mathcal{P}}\cdot \omega$ is a $\pi$-tensor field  of the same type
    defined by{\,\em:}\vspace{-0.3cm}
\begin{equation*}\label{eq.i2}
    (\mathcal{P}\cdot \omega)(\overline{X}_{1},..., \overline{X}_{p}):= \omega(\phi(\overline{X}_{1}),..., \phi(\ov
    X_{p})).
    \end{equation*}
    Moreover, for any $\pi$-tensor field $\omega$ is called an indicatory tensor
    if $\mathcal{P}\cdot \omega=\omega$.
\end{defn}

\begin{thm}\cite{sca.}\label{thm.1} A Finsler manifold $(M,L)$ is of scalar curvature $k$ if, and only, if
  the $h$-curvature \,${\overcirc{R}}$ satisfies \footnote{$\mathfrak{A}_{\overline{X},\overline{Y}}\set{\omega(\overline{X},\overline{Y})}:=
\omega(\overline{X},\overline{Y})-\omega(\overline{Y},\overline{X})$}
\begin{eqnarray*}
   {\overcirc{R}}(\overline{X},\ov Y)\ov Z&=& \mathfrak{A}_{\overline{X},\ov Y}\{\phi(\ov Y)\{\ell(\overline{Z})[k\ell(\overline{X})+\frac{1}{3}C^{k}(\overline{X})]
    +\frac{1}{3}B^{k}(\ov Z,\overline{X})\\
   && + \frac{2}{3}\ell(\overline{X})C^{k}(\ov Z)+k\hbar(\ov Z,\overline{X})\}+\frac{1}{3}\ell(\overline{X})C^{k}(\ov Y)\phi(\ov Z)\\
   &&+L^{-1}\hbar(\overline{X},\ov Z)\ov \eta\,
   [k\ell(\ov Y)+\frac{1}{3}C^{k}(\ov Y)]\},
\end{eqnarray*}
where
$C^{k}(\overline{X}):=L(\stackrel{v}{D^{\circ}}k)(\overline{X}),\ \    B^{k}(\ov
X,\ov Y):=L(\mathcal{P}\cdot \stackrel{v}{D^{\circ}}C^{k})(\overline{X}, \ov
Y)$,
and $\stackrel{v}{D^{\circ}}$  is the vertical covariant derivative associated with $D^{\circ}$.
\end{thm}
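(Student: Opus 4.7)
The strategy is to deduce the stated expression for $\overcirc{R}$ by twice applying the vertical covariant derivative $\stackrel{v}{D^{\circ}}$ to the defining relation $H(\overline{X}) = kL^{2}\phi(\overline{X})$ and then invoking two Bianchi-type identities available for the Berwald connection. The first reconstructs $\widehat{\overcirc{R}}$ from the deviation tensor $H = i_{\overline{\eta}}\widehat{\overcirc{R}}$ via antisymmetrized vertical differentiation; the second reconstructs the full $\overcirc{R}$ from $\widehat{\overcirc{R}}(\overline{X},\overline{Y}) = \overcirc{R}(\overline{X},\overline{Y})\overline{\eta}$ via a further vertical derivative. Both reconstructions are clean precisely because of the torsion-freeness of $D^{\circ}$ and the vanishing of the (v)hv-torsion $\widehat{P^{\circ}}$ guaranteed by Theorem \ref{th.1a}.

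For the forward direction (scalar curvature $\Rightarrow$ formula), I would first take $\stackrel{v}{D^{\circ}}$ of $H(\overline{X}) = kL^{2}\phi(\overline{X})$, using the Leibniz rule together with the standard identities $\stackrel{v}{D^{\circ}}\overline{\eta} = \mathrm{id}$, $(\stackrel{v}{D^{\circ}}L)(\overline{X}) = \ell(\overline{X})$, $(\stackrel{v}{D^{\circ}}\ell)(\overline{Y},\overline{X}) = L^{-1}\hbar(\overline{Y},\overline{X})$, and $(\stackrel{v}{D^{\circ}}k)(\overline{X}) = L^{-1}C^{k}(\overline{X})$. The outcome is an explicit expression for $(\stackrel{v}{D^{\circ}}H)(\overline{Y},\overline{X})$ as a polynomial in $k$, $C^{k}$, $\ell$, $\overline{\eta}$, $\hbar$, and $\phi$. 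Applying $\mathfrak{A}_{\overline{X},\overline{Y}}$ and the first Bianchi-type identity then produces $\widehat{\overcirc{R}}(\overline{X},\overline{Y})$. A second vertical differentiation, followed by the reconstruction formula that recovers $\overcirc{R}(\overline{X},\overline{Y})\overline{Z}$ from $\stackrel{v}{D^{\circ}}\widehat{\overcirc{R}}$, yields the full curvature. The tensor $B^{k}$ enters precisely when this second vertical derivative lands on $C^{k}$ and is then projected by $\mathcal{P}$, in exact accordance with its definition $B^{k}(\overline{X},\overline{Y}) = L(\mathcal{P}\cdot \stackrel{v}{D^{\circ}}C^{k})(\overline{X},\overline{Y})$.

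The converse direction (formula $\Rightarrow$ scalar curvature) is a direct verification: setting $\overline{Z} := \overline{\eta}$ in the stated expression (to obtain $\widehat{\overcirc{R}}(\overline{X},\overline{Y})$) and then contracting with $\overline{Y} := \overline{\eta}$ (to obtain $H(\overline{X})$), one invokes $\phi(\overline{\eta}) = 0$, $\ell(\overline{\eta}) = L$, $\hbar(\cdot,\overline{\eta}) = 0$, together with $C^{k}(\overline{\eta}) = 0$ (from the $0$-homogeneity of $k$ in $y$), to make almost every term collapse, leaving precisely $H(\overline{X}) = kL^{2}\phi(\overline{X})$. The principal obstacle, and where care is most needed, is the extensive bookkeeping in the two vertical differentiations: the many Leibniz terms must be reorganized into the compact $\mathfrak{A}_{\overline{X},\overline{Y}}$-antisymmetrized form displayed in the statement, which requires that several symmetric contributions cancel---a cancellation that ultimately rests on the $0$-homogeneity of $k$ and on the projection character of $\mathcal{P}$.
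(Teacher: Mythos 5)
The paper itself offers no proof of this theorem---it is imported verbatim from \cite{sca.}---but your strategy (recovering $\widehat{\overcirc{R}}$ from the deviation tensor by the antisymmetrized vertical derivative identity $\widehat{\overcirc{R}}(\overline{X},\overline{Y})=\tfrac{1}{3}\mathfrak{A}_{\overline{X},\overline{Y}}\{(D^{\circ}_{\gamma\overline{X}}H)(\overline{Y})\}$, then recovering $\overcirc{R}$ from $\widehat{\overcirc{R}}$ by a further vertical derivative, both valid because $D^{\circ}$ is torsion-free with $\widehat{P^{\circ}}=0$) is exactly the route taken in that reference, and your converse check by substituting $\overline{\eta}$ into the arguments is correct as stated. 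The only points wanting more care are that the two reconstruction identities must be written with their precise coefficients (the $\tfrac{1}{3}$'s in the displayed formula originate entirely from the first one), and that replacing $\stackrel{v}{D^{\circ}}C^{k}$ by the projected tensor $B^{k}=L\,\mathcal{P}\cdot\stackrel{v}{D^{\circ}}C^{k}$ produces extra terms in $\ell$ and $\overline{\eta}$ that must be shown to reassemble into the remaining summands of the formula.
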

\begin{thm}\cite{sca.}\label{thm.5} A necessary and sufficient condition for a Finsler manifold of scalar curvature $k$ to be  of constant curvature is that
  the $\pi$-scalar form $C^{k}$ (or $B^{k}$) vanishes.
\end{thm}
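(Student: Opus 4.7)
The plan is to derive both directions from the characterization of scalar curvature in Theorem~2.4 together with the second Bianchi identity for the Berwald connection. \emph{Necessity} is immediate: if $k$ is a real constant then $\stackrel{v}{D^{\circ}}k = 0$, so $C^{k}=L\stackrel{v}{D^{\circ}}k = 0$ and consequently $B^{k}=L(\mathcal{P}\cdot\stackrel{v}{D^{\circ}}C^{k}) = 0$.

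For the \emph{sufficiency assuming $C^{k}=0$}: since $L>0$ on $\T M$, the vanishing $L(\stackrel{v}{D^{\circ}}k)(\overline{X})=0$ for every $\overline{X}$ forces $\stackrel{v}{D^{\circ}}k = 0$, so $k$ is constant along the fibres of $\pi$ and descends to a smooth function $k(x)$ on $M$. Substituting $C^{k}=0$ (hence $B^{k}=0$) into the formula of Theorem~2.4 collapses the $h$-curvature to
\[
{\overcirc{R}}(\overline{X},\overline{Y})\overline{Z} = k\,\mathfrak{A}_{\overline{X},\overline{Y}}\bigl\{\phi(\overline{Y})[\ell(\overline{X})\ell(\overline{Z})+\hbar(\overline{X},\overline{Z})]+L^{-1}\ell(\overline{Y})\hbar(\overline{X},\overline{Z})\overline{\eta}\bigr\}.
\]
I would then apply the second Bianchi identity for $D^{\circ}$, which by the torsion-freeness of Theorem~2.1(b) reduces to the cyclic sum over $\overline{W},\overline{X},\overline{Y}$ of $(\stackrel{h}{D^{\circ}}_{\beta\overline{W}}{\overcirc{R}})(\overline{X},\overline{Y})\overline{Z}=0$. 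Since $\stackrel{h}{D^{\circ}}L=0$ by Theorem~2.1(a), the objects $\overline{\eta}$, $\ell$, $\phi$ are $h$-parallel modulo the Landsberg-type correction arising from $\stackrel{h}{D^{\circ}}\hbar$, and the horizontal derivative of the right-hand side produces a scalar factor $(\stackrel{h}{D^{\circ}}_{\beta\overline{W}}k)$ multiplying the angular expression above. Contracting the resulting cyclic identity by the inverse angular metric in appropriate slots yields $(n-2)(\stackrel{h}{D^{\circ}}k)\,\hbar(\cdot,\cdot)=0$, and the hypothesis $n\geq 3$ from Definition~2.1 then gives $\stackrel{h}{D^{\circ}}k=0$. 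Combined with $\stackrel{v}{D^{\circ}}k=0$ this forces $dk=0$, so $k$ is a real constant on the connected manifold $M$.

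For the \emph{sufficiency assuming $B^{k}=0$}: the $\pi$-form $C^{k}$ is indicatory since $C^{k}(\overline{\eta})=L\,\mathcal{C}(k)=0$ by the $0$-homogeneity of $k$ (with $\mathcal{C}$ the Liouville field). The vanishing $B^{k}=L(\mathcal{P}\cdot\stackrel{v}{D^{\circ}}C^{k})=0$ then asserts that the indicatory part of $\stackrel{v}{D^{\circ}}C^{k}$ is zero; combined with the homogeneity relation $L(\stackrel{v}{D^{\circ}}C^{k})(\overline{\eta},\cdot)=-C^{k}$ coming from the $0$-homogeneity of $C^{k}$, this recovers $C^{k}=0$ and reduces to the previous case.

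\emph{Main obstacle.} The technical heart is the Bianchi contraction singling out $\stackrel{h}{D^{\circ}}k$ with a nonzero coefficient: after the cyclic sum and the trace by $\hbar^{-1}$, the bookkeeping of the indicatory identities $\ell\circ\phi=0$, $\hbar(\overline{\eta},\cdot)=0$, $\phi(\overline{\eta})=0$ must be carried out carefully to see the dimensional factor $(n-2)$ emerge. Controlling the non-vanishing Landsberg contribution from $\stackrel{h}{D^{\circ}}\hbar$ is the delicate point; these terms must be shown either to combine to zero or to factor through $\stackrel{h}{D^{\circ}}k$ itself.
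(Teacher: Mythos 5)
First, a point of orientation: the paper does not prove this statement at all --- it is imported verbatim from the reference \cite{sca.}, where the proof proceeds by deriving, from the explicit form of $\overcirc{R}$ in Theorem \ref{thm.1} together with the Bianchi identities of the Berwald connection, a package of algebraic identities relating $C^{k}$, $B^{k}$, $A^{k}$ and the horizontal derivative of $k$ (the ``Lemma 3.1 of \cite{sca.}'' that the present paper invokes later, e.g. $\mathfrak{A}_{\overline{X},\overline{Y}}\set{A^{k}(\overline{X},\overline{Y},\overline{Z})+C^{k}(\overline{X})\hbar(\overline{Y},\overline{Z})}=0$), and then contracting to extract a factor $(n-2)$. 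Your necessity direction is correct and trivial, and your overall strategy for sufficiency (second Bianchi identity plus a trace) is the right one --- the implication $C^{k}=0\Rightarrow k=\mathrm{const}$ is exactly the Finslerian Schur lemma. But you have only sketched it: the decisive point, namely that the contributions of $\stackrel{h}{D^{\circ}}\hbar$ (equivalently of $\stackrel{h}{D^{\circ}}g$, which for the Berwald connection is governed by the Landsberg tensor and does not vanish in general) either cancel in the cyclic sum or factor through $\stackrel{h}{D^{\circ}}k$, is precisely what you flag as the ``delicate point'' and leave unproved. That cancellation is the entire content of the hard direction; without it the asserted identity $(n-2)(\stackrel{h}{D^{\circ}}k)\,\hbar=0$ is unsubstantiated.

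The second, sharper gap is your reduction of the case $B^{k}=0$ to the case $C^{k}=0$ by homogeneity alone: this step is not just incomplete but logically broken. With the paper's conventions one has $(\stackrel{v}{D^{\circ}}C^{k})(\overline{\eta},\overline{Y})=0$ (Euler's relation, since $C^{k}$ is homogeneous of degree $0$) and $(\stackrel{v}{D^{\circ}}C^{k})(\overline{X},\overline{\eta})=-C^{k}(\overline{X})$ (from $C^{k}(\overline{\eta})=0$ and the deflection $D^{\circ}_{\gamma\overline{X}}\overline{\eta}=\overline{X}$), rather than the relation you wrote. More importantly, these two contractions together with $\mathcal{P}\cdot\stackrel{v}{D^{\circ}}C^{k}=0$ only determine $\stackrel{v}{D^{\circ}}C^{k}=-L^{-1}C^{k}\otimes\ell$, which is consistent with an \emph{arbitrary} nonvanishing indicatory $C^{k}$: the $\overline{\eta}$-contractions live entirely in the part of the tensor annihilated by $\mathcal{P}$, so the vanishing of the indicatory part carries no information about them, and your conclusion $C^{k}=0$ does not follow. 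To close this you need a curvature identity special to scalar-curvature spaces: from $B^{k}=0$ one gets $A^{k}=0$, whence the identity quoted above reduces to $\mathfrak{A}_{\overline{X},\overline{Y}}\set{C^{k}(\overline{X})\hbar(\overline{Y},\overline{Z})}=0$, and tracing gives $(n-2)C^{k}=0$, hence $C^{k}=0$ for $n\geq 3$.
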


\begin{rem} \label{rem.1}
One easily show that the $\pi$-tensor fields $\widehat{P}$, $T$, $\phi$,  $\hbar$, $C^{k}$ and $B^{k}$
are indicatory tensors, and that $\mathcal{P}\cdot \ell$ vanishes identically.
\end{rem}

\bigskip

Now, we are in a position to introduce the definition of  Finsler spaces of $H_{p}$-scalar
curvature.
\begin{defn}\label{Hp-sca.} A Finsler manifold $(M,L)$ of dimension $n\geq 4$ is
called  of $H_{p}$-scalar curvature ($H_{p}$-sc) $\varepsilon$  if the $h$-curvature tensor
 of Berwald connection $\overcirc{R}$  satisfies
 $$(\mathcal{P}\cdot \overcirc{R})(\overline{X}, \ov Y, \ov Z, \ov
 W)=\varepsilon\,\mathfrak{A}_{\overline{X},\ov Y}\,\set{\hbar(\overline{X},\ov Z)\hbar(\ov Y,\ov W)}, $$
  where $\varepsilon(x,y)\in C^\infty(\T M)$ is a positively homogenous of degree zero in $y$.
   Especially, if the $H_{p}$-scalar curvature $\varepsilon$ is constant,
    then $(M,L)$ is called a Finsler manifold of $H_{p}$-constant
    curvature ($H_{p}$-cc).
    Moreover, if the $H_{p}$-scalar curvature $\varepsilon$ vanishes,
    then $(M,L)$ is called a Finsler manifold of vanishing $H_{p}$-scalar curvature.
\end{defn}

Now, we investigate  intrinsically a characterization for a Finsler spaces of $H_{p}$-sc.
\begin{thm} \label{A} A Finsler manifold  is of $H_{p}$-sc $k$, if and only if the h-Berwald curvature \, $\overcirc{R}$ has the form
   \begin{eqnarray}
     \overcirc{R}(\overline{X},\overline{Y},\overline{Z},\overline{w}) &=& L^{-1} \{\ell(\overline{Z}) \widehat{R}(\overline{X},\overline{Y},\overline{W})-
     \ell(\overline{W}) \widehat{R}(\overline{X},\overline{Y},\overline{Z})\} \nonumber \\
     && +\mathfrak{A}_{\overline{X},\overline{Y}}\{
     L^{-1} \ell(\overline{X})\{\widehat{R}(\overline{Z},\overline{W},\overline{Y})-T(H(\overline{Z}),\overline{W},\overline{Y})+
     T(H(\overline{W}),\overline{Z},\overline{Y}) \nonumber \\
     && -T(H(\overline{Y}),\overline{Z},\overline{W})-(D^{o}_{\beta \overline{\eta}} \widehat{P})(\overline{Z},\overline{W},\overline{Y})\} \label{ee}\\
     && -      L^{-2}\{\widehat{R}(\overline{\eta},\overline{X},\overline{Z}) \ell(\overline{Y}) \ell(\overline{W})+
     \widehat{R}(\overline{\eta},\overline{Y},\overline{W}) \ell(\overline{X}) \ell(\overline{Z})-k L^{2}
     \hbar(\overline{X},\overline{Z}) \hbar(\overline{Y},\overline{W})\} \},  \nonumber
   \end{eqnarray}
where  $\widehat{P}$  is the (v)hv-torsion of Cartan connection $\nabla$, $\widehat{P}(\overline{X},\overline{Y},\overline{Z}):=g(\widehat{P}(\overline{X},\overline{Y}),\overline{Z})$ $\widehat{R}(\overline{X},\overline{Y},\overline{Z}):=g(\widehat{R}(\overline{X},\overline{Y}),\overline{Z})$ and  $T(\overline{X},\overline{Y},\overline{Z}):=g(T(\overline{X},\overline{Y}),\overline{Z})$.
\end{thm}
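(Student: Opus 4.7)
The plan is to prove the equivalence by decomposing each $\pi$-vector field via the identity $\overline{X}=\phi(\overline{X})+L^{-1}\ell(\overline{X})\ov\eta$ and then matching the resulting contractions of $\overcirc{R}$ against the tensors on the right-hand side of (\ref{ee}).

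For the direction $(\Rightarrow)$, I would expand $\overcirc{R}(\overline{X},\overline{Y},\overline{Z},\overline{W})$ by multilinearity, inserting the splitting in every slot. This produces sixteen sub-terms of the form $\overcirc{R}(\star_1,\star_2,\star_3,\star_4)$ with each $\star_i\in\{\phi(\cdot),\,L^{-1}\ell(\cdot)\ov\eta\}$. Those with $\ov\eta$ in both entries of the skew pair $(\overline{X},\overline{Y})$ vanish by antisymmetry. The all-$\phi$ term is, by definition, $(\mathcal{P}\cdot\overcirc{R})(\overline{X},\overline{Y},\overline{Z},\overline{W})$, which by Definition \ref{Hp-sca.} equals $k\,\mathfrak{A}_{\overline{X},\overline{Y}}\{\hbar(\overline{X},\overline{Z})\hbar(\overline{Y},\overline{W})\}$; this supplies the $kL^{2}\hbar\!\cdot\!\hbar$ contribution sitting inside the $-L^{-2}\{\cdots\}$ bracket of (\ref{ee}). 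The remaining sub-terms carry one or two factors of $\ov\eta$, and each such $\ov\eta$-contraction must be translated into a Cartan-side quantity. The crucial identities are: $\overcirc{R}(\overline{X},\overline{Y})\ov\eta=\widehat{\overcirc{R}}(\overline{X},\overline{Y})$ together with $i_{\ov\eta}\widehat{\overcirc{R}}=H$, which handles the slot-$3$ and combined slot-$3$-with-slot-$1$ insertions; the standard Berwald--Cartan comparison, which rewrites $\widehat{\overcirc{R}}$ as $\widehat{R}$ plus the combination of $T(H(\cdot),\cdot,\cdot)$-terms and $(D^\circ_{\beta\ov\eta}\widehat{P})$ seen inside the $L^{-1}\ell(\overline{X})\{\cdots\}$ bracket of (\ref{ee}); and the indicatory nature of $\widehat{P}$, $T$, $\phi$, $\hbar$ (Remark \ref{rem.1}), which absorbs residual $\phi$'s whenever they meet one of these tensors. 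Regrouping by the number of $\ell$-factors carried, the single-$\ov\eta$ sub-terms reassemble into the leading $L^{-1}\{\ell(\overline{Z})\widehat{R}(\overline{X},\overline{Y},\overline{W})-\ell(\overline{W})\widehat{R}(\overline{X},\overline{Y},\overline{Z})\}$ together with the skew-symmetrized $L^{-1}\ell(\overline{X})\{\cdots\}$, while the double-$\ov\eta$ sub-terms produce the $-L^{-2}\{\widehat{R}(\ov\eta,\overline{X},\overline{Z})\ell(\overline{Y})\ell(\overline{W})+\widehat{R}(\ov\eta,\overline{Y},\overline{W})\ell(\overline{X})\ell(\overline{Z})\}$ residue, reconstructing (\ref{ee}) verbatim.

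For the converse $(\Leftarrow)$, I would apply $\mathcal{P}$ to both sides of (\ref{ee}) slotwise. Every summand on the right-hand side carrying an explicit factor $\ell(\overline{X})$, $\ell(\overline{Y})$, $\ell(\overline{Z})$ or $\ell(\overline{W})$ is annihilated because $\ell\circ\phi=0$; in particular the entire first bracket and the $\widehat{R}(\ov\eta,\cdot,\cdot)\ell\ell$ contributions drop out. The surviving piece is $k\,\mathfrak{A}_{\overline{X},\overline{Y}}\{\hbar(\overline{X},\overline{Z})\hbar(\overline{Y},\overline{W})\}$, since $\hbar$ is indicatory (Remark \ref{rem.1}) and is therefore preserved by the projection. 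This is exactly the defining relation of Definition \ref{Hp-sca.}.

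The main obstacle is the systematic bookkeeping of signs and $L$-weights in the forward direction, together with invoking the precise version of the Berwald--Cartan comparison so that the three $T(H(\cdot),\cdot,\cdot)$-terms and the $(D^\circ_{\beta\ov\eta}\widehat{P})$-term appear with the exact arrangement displayed inside the $\mathfrak{A}_{\overline{X},\overline{Y}}$-bracket of (\ref{ee}). Once that comparison is in place and the two basic contractions $\overcirc{R}(\cdot,\cdot)\ov\eta=\widehat{\overcirc{R}}$ and $i_{\ov\eta}\widehat{\overcirc{R}}=H$ are used, the reorganisation of the sixteen sub-terms into the stated form is mechanical.
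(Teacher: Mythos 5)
Your proposal follows essentially the same route as the paper's proof: split each slot by $\overline{X}=\phi(\overline{X})+L^{-1}\ell(\overline{X})\,\overline{\eta}$, identify the all-$\phi$ term with $(\mathcal{P}\cdot\overcirc{R})$ and hence with $k\,\mathfrak{A}_{\overline{X},\overline{Y}}\{\hbar(\overline{X},\overline{Z})\hbar(\overline{Y},\overline{W})\}$, convert the $\overline{\eta}$-contractions via the symmetry properties of $\overcirc{R}$, and for the converse apply $\mathcal{P}$ to (\ref{ee}) and annihilate every $\ell$-bearing term using Remark \ref{rem.1}. One caution in executing the forward direction: the $T(H(\cdot),\cdot,\cdot)$- and $(D^{\circ}_{\beta\overline{\eta}}\widehat{P})$-corrections do not arise from comparing $\widehat{\overcirc{R}}$ with $\widehat{R}$ (these coincide by Lemma \ref{lem.q}(b)); they arise from the failure of pair-exchange symmetry of $\overcirc{R}$, i.e.\ Lemma \ref{lem.e}, which is exactly what is needed to evaluate the contraction $\overcirc{R}(\overline{\eta},\overline{Y},\overline{Z},\overline{W})$ with $\overline{\eta}$ in the \emph{first} slot.
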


 To prove this theorem, we need the following two lemmas.
 \begin{lem} \label{lem.q}\cite{r96} The h-curvature tensor \, $\overcirc{R}$  of the Berwald connection
has the properties:
\begin{description}
 \item[(a)]
 $\overcirc{R}(\overline{X},\overline{Y},\overline{Z},\overline{W})=- \,
 \overcirc{R}(\overline{Y},\overline{X},\overline{Z},\overline{W})$,

\item[(b)] $  \widehat{\overcirc{R}}(\overline{X},
\overline{Y})= \widehat{R}(\overline{X},
\overline{Y})$,

\item[(c)] $\overcirc{R}(\overline{X},\overline{Y},\overline{Z},\overline{W})+
 \overcirc{R}(\overline{X},\overline{Y},\overline{W},\overline{Z})=2\mathfrak{A}_{\overline{X},\overline{Y}}
 \{(D^\circ_{\beta \overline{Y}}\widehat{P})(\overline{X},\overline{Z},\overline{W})\}-2T(\widehat{R}(\overline{X},\overline{Y}),\overline{Z}, \overline{W})$,

 \item[(d)]  $\mathfrak{S}_{\overline{X},\overline{Y},\overline{Z}}\{
     \overcirc{R}(\overline{X},\overline{Y}) \overline{Z}\}=0$,
 \end{description}
  \end{lem}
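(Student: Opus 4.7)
The plan is to handle the four items in order, drawing on the defining properties of the Berwald connection from Theorem 1.1 together with the fact that the Cartan and Berwald connections share the same canonical nonlinear connection on $\T M$. Item (a) is immediate from the very definition: unwinding $\textbf{K}(X,Y)\rho Z = -D^{\circ}_X D^{\circ}_Y \rho Z + D^{\circ}_Y D^{\circ}_X \rho Z + D^{\circ}_{[X,Y]}\rho Z$ and using $[X,Y] = -[Y,X]$ shows that each of the three terms is antisymmetric in $X,Y$; specializing to $X = \beta\overline{X}$, $Y = \beta\overline{Y}$ and pairing with $\overline{W}$ via $g$ yields (a). Item (d) follows from the torsion-free property $\textbf{T}^{\circ} = 0$ (Theorem 1.1(b)) by the classical argument: substituting $\textbf{T}^{\circ}(X,Y) = D^{\circ}_X \rho Y - D^{\circ}_Y \rho X - \rho[X,Y] = 0$ into $\mathfrak{S}_{\overline{X},\overline{Y},\overline{Z}}\{\textbf{K}(\beta\overline{X},\beta\overline{Y})\overline{Z}\}$ and reorganizing reduces the cyclic sum to $\rho$ applied to a cyclic sum of iterated Lie brackets, which vanishes by the Jacobi identity.

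For (b), the key observation is that for any regular connection $D$ on $\pi^{-1}(TM)$ the (v)h-torsion admits a coordinate-free expression of the form $\widehat{R}(\overline{X},\overline{Y}) = K([\beta\overline{X},\beta\overline{Y}])$, where $K$ and $\beta$ are the connection map and horizontal map of $D$; this follows by applying the curvature definition with $\rho Z = \overline{\eta}$ and using that $D_X \overline{\eta} = K(X)$ vanishes on horizontal $X$, so that the two iterated terms drop out and only the bracket term survives. Since the Cartan and Berwald connections share the same canonical nonlinear connection (both induced by the canonical spray of the Finsler manifold), their horizontal maps and deflection maps coincide, and hence $\widehat{\overcirc{R}}(\overline{X},\overline{Y}) = \widehat{R}(\overline{X},\overline{Y})$.

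Item (c) is the main obstacle and is where the real work lies. The strategy is to write $D^{\circ} = \nabla + A$ for a suitable difference $\pi$-tensor $A$, express $\overcirc{R}$ in terms of the Cartan curvature $R$ and $A$, and exploit that $\nabla$ is metric ($\nabla g = 0$), which forces $R(\overline{X},\overline{Y},\overline{Z},\overline{W}) + R(\overline{X},\overline{Y},\overline{W},\overline{Z}) = 0$. Consequently, the symmetrized expression on the left of (c) depends only on $A$ and its horizontal covariant derivative. The difference tensor $A$ can be computed using the defining property $\widehat{P}^{\circ} = 0$ from Theorem 1.1(c) together with the known expression of $\widehat{P}$ in terms of the Cartan tensor $T$; substituting, antisymmetrizing in $\overline{X}, \overline{Y}$, and invoking (b) to write $\widehat{\overcirc{R}}$ as $\widehat{R}$ then produces precisely the two terms $2\mathfrak{A}_{\overline{X},\overline{Y}}\{(D^{\circ}_{\beta\overline{Y}}\widehat{P})(\overline{X},\overline{Z},\overline{W})\}$ and $-2T(\widehat{R}(\overline{X},\overline{Y}),\overline{Z},\overline{W})$. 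The chief difficulty is the sign and antisymmetrization bookkeeping when collecting the terms arising from $(D^{\circ}_{\beta\overline{Y}} - \nabla_{\beta\overline{Y}})\widehat{P}$ against the various $A \cdot R$-type contributions.
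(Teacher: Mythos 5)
The paper does not prove this lemma at all: it is imported from \cite{r96}, so there is no internal argument to measure your attempt against. Taken on its own terms, your treatment of (a), (b) and (d) is correct. The antisymmetry (a) is indeed definitional; (d) follows from $\textbf{T}^{\circ}=0$ by two substitutions of the torsion-free identity plus the Jacobi identity, exactly as you say; and the mechanism $\widehat{R}(\overline{X},\overline{Y})=K([\beta\overline{X},\beta\overline{Y}])$ is the right one for (b), provided you grant the standard (but nowhere established in this paper) fact that the Cartan and Berwald connections determine the same nonlinear connection, hence the same horizontal map $\beta$ and the same connection map $K$.

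For (c) you have given a plan, not a proof, and the plan's success rests precisely on the ``bookkeeping'' you defer. To close it, use the relation from \cite{r96} that the paper itself quotes later (in the proof of its theorem on perpendicular scalar curvature):
$$\overcirc{R}(\overline{X},\overline{Y})\overline{Z}= R(\overline{X},\overline{Y})\overline{Z}- T(\widehat{R}(\overline{X},\overline{Y}),\overline{Z})-
  \mathfrak{A}_{\overline{X},\overline{Y}}\{(\nabla_{\beta \overline{X}}\widehat{P})(\overline{Y}, \overline{Z})
    +\widehat{P}(\overline{X},\widehat{P}(\overline{Y},\overline{Z}))\}.$$
Pair with $\overline{W}$ via $g$ and symmetrize in $(\overline{Z},\overline{W})$: the $R$-part vanishes because $\nabla$ is $h$-metrical; the $T$-term and the $\nabla\widehat{P}$-term are already symmetric in $(\overline{Z},\overline{W})$ (total symmetry of $T(\cdot,\cdot,\cdot)$, and $g((\nabla_{\beta\overline{X}}\widehat{P})(\overline{Y},\overline{Z}),\overline{W})=g((\nabla_{\beta\overline{X}}\widehat{P})(\overline{Y},\overline{W}),\overline{Z})$), so each doubles; and the quadratic term, rewritten by the total symmetry of $\widehat{P}(\cdot,\cdot,\cdot)$ as $g(\widehat{P}(\overline{X},\overline{W}),\widehat{P}(\overline{Y},\overline{Z}))+g(\widehat{P}(\overline{X},\overline{Z}),\widehat{P}(\overline{Y},\overline{W}))$ after the $(\overline{Z},\overline{W})$-symmetrization, is symmetric in $(\overline{X},\overline{Y})$ and hence annihilated by $\mathfrak{A}_{\overline{X},\overline{Y}}$. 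The same observation shows $\mathfrak{A}_{\overline{X},\overline{Y}}\{(\nabla_{\beta\overline{Y}}\widehat{P})(\overline{X},\overline{Z},\overline{W})\}=\mathfrak{A}_{\overline{X},\overline{Y}}\{(D^{\circ}_{\beta\overline{Y}}\widehat{P})(\overline{X},\overline{Z},\overline{W})\}$, which reconciles your $\nabla$-based computation with the $D^{\circ}$ appearing in the statement. Until these cancellations are exhibited, (c) is not proved; once they are, your route goes through.
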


 \begin{lem} \label{lem.e} The h-curvature tensor \,$\overcirc{R}$  of the Berwald connection
 satisfies
\begin{eqnarray*}
 \overcirc{R}(\overline{X},\overline{Y},\overline{Z},\overline{W}) &=& \overcirc{R}(\overline{Z},\overline{W},\overline{X},\overline{Y})
 +(D^\circ_{\beta \overline{Y}}\widehat{P})(\overline{X},\overline{Z},\overline{W})-(D^\circ_{\beta \overline{X}}\widehat{P})(\overline{Z},\overline{W},\overline{Y})
  +(D^\circ_{\beta \overline{Z}}\widehat{P})(\overline{X},\overline{W},\overline{Y})\\
   &&-(D^\circ_{\beta \overline{W}}\widehat{P})(\overline{X},\overline{Z},\overline{Y})+T(\widehat{R}(\overline{X},\overline{W}),\overline{Z}, \overline{Y})
   -T(\widehat{R}(\overline{Y},\overline{W}),\overline{X}, \overline{Z})\\
   && +T(\widehat{R}(\overline{Y},\overline{Z}),\overline{X}, \overline{W})-T(\widehat{R}(\overline{X},\overline{Z}),\overline{Y}, \overline{W})
   +T(\widehat{R}(\overline{Z},\overline{W}),\overline{Y}, \overline{X})\\
   && -T(\widehat{R}(\overline{X},\overline{Y}),\overline{Z}, \overline{W}).
\end{eqnarray*}
  \end{lem}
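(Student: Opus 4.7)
The strategy is to mimic the classical Riemannian derivation of the pair-interchange symmetry $R(X,Y,Z,W)=R(Z,W,X,Y)$ from the first Bianchi identity, now accounting for the Berwaldian failure of antisymmetry in the last two arguments as encoded by Lemma \ref{lem.q}(c). The obstructions to pair-interchange will manifest themselves as precisely the $(D^{\circ}\widehat{P})$-terms and $T(\widehat{R},\cdot,\cdot)$-terms appearing in the statement.

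Concretely, I would pair the first Bianchi identity Lemma \ref{lem.q}(d) with a fourth argument via $g$ to obtain the scalar identity
$$
\mathfrak{S}_{\overline{A},\overline{B},\overline{C}}\{\overcirc{R}(\overline{A},\overline{B},\overline{C},\overline{D})\}=0,
$$
and apply it to each of the four cyclic rotations of $(\overline{X},\overline{Y},\overline{Z},\overline{W})$, namely $(X,Y,Z;W),(Y,Z,W;X),(Z,W,X;Y),(W,X,Y;Z)$. Forming the alternating combination (sum of the first and third, minus the sum of the second and fourth) yields an equation with twelve $\overcirc{R}$-summands. Using the antisymmetry (a) to normalize each first pair $(\overline{A},\overline{B})$, these twelve terms group into six signed differences of the form $\overcirc{R}(\overline{A},\overline{B},\overline{C},\overline{D})-\overcirc{R}(\overline{A},\overline{B},\overline{D},\overline{C})$. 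Writing each such difference as $2\overcirc{R}(\overline{A},\overline{B},\overline{C},\overline{D})-[\overcirc{R}(\overline{A},\overline{B},\overline{C},\overline{D})+\overcirc{R}(\overline{A},\overline{B},\overline{D},\overline{C})]$ and expanding the bracketed piece by (c) introduces the required $\mathfrak{A}_{\overline{A},\overline{B}}\{(D^{\circ}_{\beta\overline{B}}\widehat{P})(\overline{A},\overline{C},\overline{D})\}$ and $T(\widehat{R}(\overline{A},\overline{B}),\overline{C},\overline{D})$ contributions.

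Finally, a further application of (d) combined with (a) and with the antisymmetry $\widehat{R}(\overline{A},\overline{B})=-\widehat{R}(\overline{B},\overline{A})$ (inherited from (a) via (b)) collapses the six isolated $2\overcirc{R}(\overline{A},\overline{B},\overline{C},\overline{D})$ summands into $2\overcirc{R}(\overline{X},\overline{Y},\overline{Z},\overline{W})-2\overcirc{R}(\overline{Z},\overline{W},\overline{X},\overline{Y})$, and organizes the raw correction contributions into exactly the four $D^{\circ}_{\beta\overline{\cdot}}\widehat{P}$-terms (one in each of the directions $\beta\overline{X},\beta\overline{Y},\beta\overline{Z},\beta\overline{W}$) and six $T(\widehat{R}(\cdot,\cdot),\cdot,\cdot)$-terms displayed in the lemma. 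Dividing by $2$ delivers the asserted identity. The principal obstacle is precisely this final bookkeeping: one must track signs and slot orderings carefully to verify that the twelve raw torsion pieces and the expanded $\mathfrak{A}$-alternations collapse, without leftover terms, into exactly the eleven corrections listed with the correct signs.
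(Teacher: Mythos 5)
Your plan is correct and is essentially the paper's own argument: the paper's proof consists of the single sentence that the identity is ``a direct consequence of Lemma \ref{lem.q},'' and the four-fold Bianchi/pair-interchange computation you describe, with the failure of last-slot antisymmetry supplied by Lemma \ref{lem.q}(c) generating the $D^{\circ}\widehat{P}$ and $T(\widehat{R}(\cdot,\cdot),\cdot,\cdot)$ corrections, is exactly the computation that sentence leaves to the reader. The only caveat is that your write-up defers the final sign and slot bookkeeping (where the symmetry of $\widehat{P}$ is needed to collapse the twelve raw $D^{\circ}\widehat{P}$ contributions to the four displayed ones), but the strategy is sound and matches the paper.
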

\begin{proof} The proof is a direct consequence of  Lemma \ref{lem.q}.
\end{proof}

\noindent\textbf{Proof of Theorem \ref{A}:} Applying the projection operator  $\mathcal{P}$ on the h-curvature tensor $\overcirc{R}$
taking into account definition \ref{ind.} and Lemma \ref{lem.q}, we obtain
\begin{eqnarray}
  (\mathcal{P} \cdot \overcirc{R})(\overline{X},\overline{Y},\overline{Z},\overline{W})  &=& \overcirc{R} (\overline{Z},\overline{W},\overline{X},\overline{Y})
  -L^{-1}\{\ell(\overline{W}) \,\overcirc{R} (\overline{X}, \overline{Y}, \overline{Z}, \overline{\eta})
  +\ell(\overline{Z}) \, \overcirc{R} (\overline{X}, \overline{Y}, \overline{\eta}, \overline{W})\} \nonumber \\
  &&+\ell(\overline{Y})\, \overcirc{R} (\overline{X}, \overline{\eta}, \overline{Z}, \overline{W})
  +\ell(\overline{X}) \,\overcirc{R} (\overline{\eta}, \overline{Y}, \overline{Z}, \overline{W})\}\nonumber \\
  && +L^{-2} \,
  \overcirc{R}(\overline{X}, \overline{\eta}, \overline{Z}, \overline{\eta}) \ell(\overline{Y}) \ell(\overline{W}) +\overcirc{R}(\overline{X}, \overline{\eta}, \overline{\eta}, \overline{W}) \ell(\overline{Y}) \ell(\overline{Z})
    \label{eq.44}\\
  && +\,\overcirc{R}(\overline{\eta}, \overline{Y}, \overline{Z}, \overline{\eta}) \ell(\overline{X}) \ell(\overline{W})
  +\overcirc{R}(\overline{\eta}, \overline{Y}, \overline{\eta}, \overline{W}) \ell(\overline{X}) \ell(\overline{Z})\} \nonumber.
\end{eqnarray}

\par

On the other hand, from Lemma \ref{lem.q} and Lemma \ref{lem.e}, we obtain
\begin{eqnarray*}
     \overcirc{R}(\overline{\eta}, \overline{Y}, \overline{Z}, \overline{W})&=& \widehat{R}(\overline{Z}, \overline{W}, \overline{Y}) -
    (D^\circ_{\beta \overline{\eta}}\widehat{P})(\overline{Z},\overline{W},\overline{Y})+T(H(\overline{W}),\overline{Z},\overline{Y})\\
    && -T(H(\overline{Z}),\overline{Y},\overline{W})-T(H(\overline{Y}),\overline{Z},\overline{W}).\\
   \overcirc{R}(\overline{\eta}, \overline{Y}, \overline{Z}, \overline{\eta})&=&-\widehat{R}(\overline{\eta}, \overline{Y}, \overline{Z})=g(H(\overline{Y}),\overline{Z}). \\
  \overcirc{R}(\overline{X}, \overline{\eta}, \overline{\eta}, \overline{Y}) &=& -\widehat{R}(\overline{\eta}, \overline{X}, \overline{Y}).
\end{eqnarray*}
By using the above relations,   Equation (\ref{eq.44}) becomes
\begin{eqnarray*}
    (\mathcal{P} \cdot \overcirc{R})(\overline{X},\overline{Y},\overline{Z},\overline{W})  &=& \overcirc{R} (\overline{Z},\overline{W},\overline{X},\overline{Y})- L^{-1} \{\ell(\overline{Z}) \widehat{R}(\overline{X},\overline{Y},\overline{W})-
     \ell(\overline{W}) \widehat{R}(\overline{X},\overline{Y},\overline{Z})\} \nonumber \\
     && -\mathfrak{A}_{\overline{X},\overline{Y}}\{
     L^{-1} \ell(\overline{X})\{\widehat{R}(\overline{Z},\overline{W},\overline{Y})-T(H(\overline{Z}),\overline{W},\overline{Y})+
     T(H(\overline{W}),\overline{Z},\overline{Y}) \nonumber \\
     && -T(H(\overline{Y}),\overline{Z},\overline{W})-(D^{o}_{\beta \overline{\eta}} \widehat{P})(\overline{Z},\overline{W},\overline{Y})\}\\
     && -      L^{-2}\{\widehat{R}(\overline{\eta},\overline{X},\overline{Z}) \ell(\overline{Y}) \ell(\overline{W})+
     \widehat{R}(\overline{\eta},\overline{Y},\overline{W}) \ell(\overline{X}) \ell(\overline{Z})\} \},  \nonumber
   \end{eqnarray*}
Now, if $(M,L)$ is a Finsler manifold of $H_p$-sc $k$, then Equation (\ref{ee}) is satisfied.
\par
Conversely, suppose that $(M,L)$ is a Finsler manifold satisfying Equation (\ref{ee}).
Then, by applying the projection operator  $\mathcal{P}$ on Equation (\ref{ee}) taking Remark \ref{rem.1} into account, one can deduce that
$(M,L)$ is of $H_p$-sc $k$.   $\qquad \qquad\qquad\quad\square$

\bigskip
 In view of Theorem \ref{A} and Remark \ref{rem.1}, we have
\begin{cor} In a Finsler  manifold of $H_p$-sc, we have
\begin{description}
  \item[(a)] $\overcirc{R}(\overline{X},\overline{Y},\overline{Z},\overline{W})+\overcirc{R}(\overline{X},\overline{Y},\overline{W},\overline{Z})
  =2L^{-1} \ell(\overline{Y})\mathfrak{A}_{\overline{X}, \overline{Y}}\{ (D^{o}_{\beta \overline{\eta}} \widehat{P})(\overline{Z},\overline{W},\overline{Y})+
  T(H(\overline{X}),\overline{Z},\overline{W}) \}$
  \item[(b)] $(\mathcal{P} \cdot \overcirc{R})(\overline{X},\overline{Y},\overline{Z},\overline{W})+(\mathcal{P} \cdot \overcirc{R})(\overline{X},\overline{Y},\overline{W},\overline{Z})=0$
\end{description}
\end{cor}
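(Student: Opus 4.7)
The plan is to derive both (a) and (b) as direct consequences of Theorem \ref{A} together with the symmetries collected in Remark \ref{rem.1} and Lemma \ref{lem.q}; no essentially new identity is required beyond careful bookkeeping. I would dispatch (b) first, since it is just two lines. From Definition \ref{Hp-sca.},
\begin{equation*}
(\mathcal{P}\cdot \overcirc{R})(\ov X,\ov Y,\ov Z,\ov W) = \varepsilon\,\mathfrak{A}_{\ov X,\ov Y}\{\hbar(\ov X,\ov Z)\hbar(\ov Y,\ov W)\} = \varepsilon\bigl(\hbar(\ov X,\ov Z)\hbar(\ov Y,\ov W) - \hbar(\ov Y,\ov Z)\hbar(\ov X,\ov W)\bigr),
\end{equation*}
and the right-hand side is manifestly antisymmetric under $\ov Z \leftrightarrow \ov W$. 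Swapping $\ov Z$ and $\ov W$ and adding to the original identity therefore yields (b) at once.

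For (a), I would plug the formula \eqref{ee} for $\overcirc{R}(\ov X, \ov Y, \ov Z, \ov W)$ into the sum $\overcirc{R}(\ov X,\ov Y,\ov Z,\ov W) + \overcirc{R}(\ov X,\ov Y,\ov W,\ov Z)$ and process the result block by block. The opening line $L^{-1}\{\ell(\ov Z)\widehat R(\ov X,\ov Y,\ov W) - \ell(\ov W)\widehat R(\ov X,\ov Y,\ov Z)\}$ is clearly antisymmetric in $\ov Z, \ov W$ and cancels. Inside the outer $\mathfrak{A}_{\ov X,\ov Y}\{\ell(\ov X)[\,\cdots\,]\}$-wrapper, the term $\widehat R(\ov Z,\ov W,\ov Y)$ (by first-two-slot antisymmetry of $\widehat R$) and the cross pair $-T(H(\ov Z),\ov W,\ov Y) + T(H(\ov W),\ov Z,\ov Y)$ are both antisymmetric in $\ov Z, \ov W$ and cancel. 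The $L^{-2}$-block, after being summed with its $\ov Z \leftrightarrow \ov W$ partner, becomes symmetric in $\ov X, \ov Y$ (using the symmetry of $\hbar$ and the first-two-slot antisymmetry of $\widehat R$ applied to the $\widehat R(\ov\eta,\cdot,\cdot)$ terms), and so is annihilated by $\mathfrak{A}_{\ov X, \ov Y}$. The only surviving pieces are $-T(H(\ov Y),\ov Z,\ov W)$ and $-(D^\circ_{\beta\ov\eta}\widehat P)(\ov Z,\ov W,\ov Y)$, each of which is symmetric in $\ov Z, \ov W$ thanks to the total symmetry of the Cartan tensor $T$ and the symmetry of the Cartan $(v)hv$-torsion $\widehat P$ in its first two arguments. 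Each therefore doubles, and after restoring the factor $\mathfrak{A}_{\ov X, \ov Y}$ they reconstitute exactly the right-hand side of (a).

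The main obstacle is the bookkeeping: keeping the outer antisymmetrizer $\mathfrak{A}_{\ov X, \ov Y}$ and the inner $\ov Z \leftrightarrow \ov W$ swap in lockstep while invoking the correct symmetries of $\widehat R$, $T$, $\hbar$, and $\widehat P$. The most delicate input is the symmetry of the Cartan connection's $(v)hv$-torsion $\widehat P$ in its first two slots (equivalently, the total symmetry of the Finslerian $P$-tensor); it is exactly this property that prevents the $\widehat P$-contribution from cancelling in the sum and lets it appear as the $D^\circ_{\beta\ov\eta}\widehat P$ term in (a).
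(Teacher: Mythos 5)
Your argument is correct and is exactly the computation the paper leaves implicit: the corollary is presented as an immediate consequence of Theorem \ref{A} and Remark \ref{rem.1}, and your term-by-term symmetrization of (\ref{ee}) in $\overline{Z},\overline{W}$ (together with the manifest $\overline{Z}\leftrightarrow\overline{W}$ antisymmetry of the defining identity for part (b)) supplies precisely the missing bookkeeping. The only caveat is that what your computation actually produces for (a) is $2\mathfrak{A}_{\overline{X},\overline{Y}}\{L^{-1}\ell(\overline{Y})[(D^{\circ}_{\beta\overline{\eta}}\widehat{P})(\overline{Z},\overline{W},\overline{X})+T(H(\overline{X}),\overline{Z},\overline{W})]\}$, with the factor $\ell(\overline{Y})$ inside the alternation; the displayed right-hand side of (a), which places $\ell(\overline{Y})$ outside $\mathfrak{A}_{\overline{X},\overline{Y}}$, is a typographical slip in the statement rather than a flaw in your derivation.
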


\begin{rem} In Definition \ref{Hp-sca.}, if we replace the h-curvature tensor $\overcirc{R}$ of Berwald connection
by the h-curvature tensor $R$ of Cartan connection, then $(M,L)$ is called of perpendicular scalar curvature  $\varepsilon$.
\end{rem}

Let us define the following tensor:
\begin{equation}\label{Q}
  Q(\overline{X},\overline{Y},\overline{Z},\overline{W}):=g(\widehat{P}(\overline{X},\overline{W}),\widehat{P}(\overline{Y},\overline{Z}))
  -g(\widehat{P}(\overline{X},\overline{Z}),\widehat{P}(\overline{Y},\overline{W})).
\end{equation}

\begin{thm} If $Q(\overline{X},\overline{Y},\overline{Z},\overline{W})=q\set{\hbar(\overline{X}, \overline{Z})\hbar(\overline{Y}, \overline{W})-\hbar( \overline{X}, \overline{W})\hbar( \overline{Y},\overline{Z})}$ holds, then the Finsler manifold of
  $H_p$-sc $k$  is of perpendicular scalar curvature $(k-q)$.
\end{thm}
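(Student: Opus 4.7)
The plan is to compare the projection of the Cartan $h$-curvature $R$ with the projection of the Berwald $h$-curvature $\overcirc{R}$. By the assumption that $(M,L)$ is of $H_p$-sc $k$, Definition \ref{Hp-sca.} gives
$$(\mathcal{P}\cdot\overcirc{R})(\overline{X},\overline{Y},\overline{Z},\overline{W})=k\,\mathfrak{A}_{\overline{X},\overline{Y}}\{\hbar(\overline{X},\overline{Z})\hbar(\overline{Y},\overline{W})\},$$
so, recalling the remark preceding the statement, it suffices to show that
$$(\mathcal{P}\cdot R)(\overline{X},\overline{Y},\overline{Z},\overline{W})=(k-q)\,\mathfrak{A}_{\overline{X},\overline{Y}}\{\hbar(\overline{X},\overline{Z})\hbar(\overline{Y},\overline{W})\}.$$

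First I would invoke the classical identity relating the $h$-curvatures of the Cartan and Berwald connections (see \cite{r48,r96}). This identity expresses $\overcirc{R}-R$ as the sum of a piece linear in the horizontal covariant derivative $\stackrel{h}{\nabla}\widehat{P}$ of the $(v)hv$-torsion $\widehat{P}$ of $\nabla$, and a piece quadratic in $\widehat{P}$. When written in $(0,4)$-form and antisymmetrized in $\overline{X},\overline{Y}$, the quadratic part reproduces exactly the tensor $Q$ of \eqref{Q}, i.e.\ terms of the shape $g(\widehat{P}(\overline{X},\overline{W}),\widehat{P}(\overline{Y},\overline{Z}))-g(\widehat{P}(\overline{X},\overline{Z}),\widehat{P}(\overline{Y},\overline{W}))$.

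Next I would apply $\mathcal{P}$ to this identity. Since $\widehat{P}$ is indicatory (Remark \ref{rem.1}), so is $Q$, and therefore $\mathcal{P}\cdot Q=Q$. The delicate step, which I expect to be the main obstacle, is to verify that the projection of the piece linear in $\stackrel{h}{\nabla}\widehat{P}$ vanishes after the $\mathfrak{A}_{\overline{X},\overline{Y}}$ antisymmetrization. This should follow from the combination of the following facts: $\widehat{P}$ is indicatory, $\mathcal{P}\cdot\ell=0$ (Remark \ref{rem.1}), and the horizontal covariant derivative commutes with the projection up to terms which themselves carry a factor of $\ell$ and thus disappear when contracted against $\phi$. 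Careful bookkeeping of these contributions is the crux.

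Once this reduction is achieved, one has the clean identity
$$(\mathcal{P}\cdot\overcirc{R})(\overline{X},\overline{Y},\overline{Z},\overline{W})=(\mathcal{P}\cdot R)(\overline{X},\overline{Y},\overline{Z},\overline{W})+Q(\overline{X},\overline{Y},\overline{Z},\overline{W}).$$
Substituting the $H_p$-sc $k$ hypothesis on the left and the assumed form $Q=q\,\mathfrak{A}_{\overline{X},\overline{Y}}\{\hbar(\overline{X},\overline{Z})\hbar(\overline{Y},\overline{W})\}$ on the right yields the desired identity with scalar $(k-q)$, which by the preceding remark is exactly the defining condition for $(M,L)$ to be of perpendicular scalar curvature $(k-q)$.
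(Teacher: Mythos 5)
There is a genuine gap, and it sits exactly where you flagged ``the main obstacle.'' The identity from \cite{r96} relating the two $h$-curvatures is
$$\overcirc{R}(\overline{X},\overline{Y})\overline{Z}= R(\overline{X},\overline{Y})\overline{Z}- T(\widehat{R}(\overline{X},\overline{Y}),\overline{Z})-\mathfrak{A}_{\overline{X},\overline{Y}}\{(\nabla_{\beta \overline{X}}\widehat{P})(\overline{Y}, \overline{Z})+\widehat{P}(\overline{X},\widehat{P}(\overline{Y},\overline{Z}))\},$$
so besides the piece linear in $\nabla\widehat{P}$ and the piece quadratic in $\widehat{P}$ there is a third correction term $T(\widehat{R}(\overline{X},\overline{Y}),\overline{Z})$ involving the Cartan tensor, which your decomposition omits altogether. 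More importantly, neither this term nor the $\nabla\widehat{P}$ term is annihilated by the projection operator: $T$ and $\widehat{P}$ are indicatory, but $T(\widehat{R}(\phi\overline{X},\phi\overline{Y}),\phi\overline{Z},\phi\overline{W})$ and $(\nabla_{\beta\phi\overline{X}}\widehat{P})(\phi\overline{Y},\phi\overline{Z},\phi\overline{W})$ have no reason to vanish, and the facts you invoke ($\mathcal{P}\cdot\ell=0$, the indicatory character of $\widehat{P}$, commutation of the horizontal derivative with $\mathcal{P}$ up to $\ell$-terms) do not produce the required cancellation. Consequently the ``clean identity'' $(\mathcal{P}\cdot\overcirc{R})=(\mathcal{P}\cdot R)+Q$ is not available by this route.

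The mechanism the paper actually uses is antisymmetrization in the \emph{last two} arguments, not vanishing under projection. Written in $(0,4)$-form, $T(\widehat{R}(\overline{X},\overline{Y}),\overline{Z},\overline{W})$ is symmetric in $(\overline{Z},\overline{W})$ by the total symmetry of the Cartan tensor, and so is $g((\nabla_{\beta\overline{X}}\widehat{P})(\overline{Y},\overline{Z}),\overline{W})$, whereas $R$ and $Q$ are antisymmetric in $(\overline{Z},\overline{W})$. Taking $\frac{1}{2}\set{\cdot(\overline{Z},\overline{W})-\cdot(\overline{W},\overline{Z})}$ therefore removes exactly the two problematic terms and yields
$$R(\overline{X},\overline{Y},\overline{Z},\overline{W})=\tfrac{1}{2}\{\overcirc{R}(\overline{X},\overline{Y},\overline{Z},\overline{W})-\overcirc{R}(\overline{X},\overline{Y},\overline{W},\overline{Z})\}-Q(\overline{X},\overline{Y},\overline{Z},\overline{W});$$
only then is $\mathcal{P}$ applied, and the $H_p$-sc hypothesis (whose right-hand side is itself antisymmetric in $\overline{Z},\overline{W}$) together with the assumed form of $Q$ gives the scalar $(k-q)$. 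If you replace the hoped-for vanishing under projection by this $(\overline{Z},\overline{W})$-antisymmetrization, the rest of your outline goes through.
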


\begin{proof} By \cite{r96}, we have
$$\overcirc{R}(\overline{X},\overline{Y})\overline{Z}= R(\overline{X},\overline{Y})\overline{Z}- T(\widehat{R}(\overline{X},\overline{Y}),\overline{Z})-
  \mathfrak{A}_{\overline{X},\overline{Y}}\{(\nabla_{\beta \overline{X}}\widehat{P})(\overline{Y}, \overline{Z})
    +\widehat{P}(\overline{X},\widehat{P}(\overline{Y},\overline{Z}))\}.$$
From which together with (\ref{Q}), taking into account the facts that $R(\overline{X},\overline{Y},\overline{Z},\overline{W})=-(\overline{X},\overline{Y},\overline{W},\overline{Z})$
and $g((\nabla_{\beta \overline{X}}\widehat{P})(\overline{Y},\overline{Z}),\overline{W})=g((\nabla_{\beta \overline{X}}\widehat{P})(\overline{Y},\overline{W}),\overline{Z})$, we obtain
\begin{equation*}
  R(\overline{X},\overline{Y},\overline{Z},\overline{W})=\frac{1}{2}\{\overcirc{R} (\overline{X},\overline{Y},\overline{Z},\overline{W})
  -\overcirc{R} (\overline{X},\overline{Y},\overline{W},\overline{Z})\}-Q(\overline{X},\overline{Y},\overline{Z},\overline{W}).
\end{equation*}
Applying the projection operator  $\mathcal{P}$ on both sides of the above relation and using Remark \ref{rem.1}, one can deduce
\begin{equation*}
  (\mathcal{P} \cdot R)(\overline{X},\overline{Y},\overline{Z},\overline{W})=\frac{1}{2}\{(\mathcal{P} \cdot \overcirc{R}) (\overline{X},\overline{Y},\overline{Z},\overline{W})
  -(\mathcal{P} \cdot \overcirc{R}) (\overline{X},\overline{Y},\overline{W},\overline{Z})\}-Q(\overline{X},\overline{Y},\overline{Z},\overline{W}).
\end{equation*}
Now, if $(M,L)$ is a Finsler manifold of $H_p$-s.c. $k$ and using the given assumption for $Q$, we  get
$$(\mathcal{P} \cdot R)(\overline{X},\overline{Y},\overline{Z},\overline{W})=(k-q)\set{\hbar(\overline{X}, \overline{Z})\hbar(\overline{Y}, \overline{W})-\hbar( \overline{X}, \overline{W})\hbar( \overline{Y},\overline{Z})}. $$
This means that the Finsler manifold $(M,L)$ is of perpendicular scalar curvature $(k-q)$.
\end{proof}

\section{  Finsler spaces of scalar curvature and  $H_p$-sc}
Here,  the necessary and sufficient condition under which a
Finsler manifold of scalar curvature turns into a Finsler manifold
of $H_{p}$-scalar curvature is investigated. Moreover, certain condtions
under which a Finsler manifolds of $H_{p}$-scalar curvature and of
scalar curvature reduces to a Finsler manifold of $H_{p}$-constant
curvature or to a Finsler manifold of vanishing $H_{p}$-scalar
curvature are obtained.

\begin{thm}\label{thm.6} Let $(M,L)$ be a Finsler manifold of scalar curvature $k$ of dimension $n\geq
4$. Then, $(M,L)$ is a Finsler manifold of $H_{p}$-scalar curvature
$\varepsilon$ if, and only if, $B^{k}(\overline{X},\ov
Y)=\alpha \hbar(\overline{X},\ov Y)$, where $\alpha=3(\varepsilon -k)$.
\end{thm}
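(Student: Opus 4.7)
\emph{Proof Plan.} Since $(M,L)$ is of scalar curvature $k$, Theorem~\ref{thm.1} supplies a closed form for the $h$-Berwald curvature $\overcirc{R}$. The strategy is to apply the projection operator $\mathcal{P}$ to that identity and compare the result with Definition~\ref{Hp-sca.}. This reduces the whole equivalence to a single bilinear identity in $B^{k}$ and $\hbar$, which a brief linear-algebra argument in the horizontal fibre then untangles.

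The first step is a term-by-term simplification of the formula in Theorem~\ref{thm.1}. Using $\phi(\overline{\eta})=0$, $\ell\circ\phi=0$, and the indicatory character of $\phi$, $\hbar$, $C^{k}$, $B^{k}$ recorded in Remark~\ref{rem.1}, every summand carrying a factor $\ell(\overline{X})$, $\ell(\overline{Y})$ or $\ell(\overline{Z})$ is annihilated by $\mathcal{P}$, and the $\overline{\eta}$-term is killed on $g$-pairing with $\phi(\overline{W})$. Only the two indicatory pieces $\phi(\overline{Y})\tfrac{1}{3}B^{k}(\overline{Z},\overline{X})$ and $\phi(\overline{Y})\,k\,\hbar(\overline{Z},\overline{X})$ survive; invoking $g(\phi(\overline{Y}),\phi(\overline{W}))=\hbar(\overline{Y},\overline{W})$ yields
\begin{equation*}
(\mathcal{P}\cdot\overcirc{R})(\overline{X},\overline{Y},\overline{Z},\overline{W})=\mathfrak{A}_{\overline{X},\overline{Y}}\bigl\{\hbar(\overline{Y},\overline{W})\bigl[\tfrac{1}{3}B^{k}(\overline{Z},\overline{X})+k\,\hbar(\overline{Z},\overline{X})\bigr]\bigr\}.
\end{equation*}
Equating this with the defining identity in Definition~\ref{Hp-sca.} and introducing the auxiliary tensor $S(\overline{X},\overline{Z}):=B^{k}(\overline{Z},\overline{X})-3(\varepsilon-k)\,\hbar(\overline{X},\overline{Z})$, the required equivalence is rephrased as
\begin{equation*}
\mathfrak{A}_{\overline{X},\overline{Y}}\bigl\{\hbar(\overline{Y},\overline{W})\,S(\overline{X},\overline{Z})\bigr\}=0.
\end{equation*}

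The forward direction is now immediate: if $B^{k}=\alpha\,\hbar$ with $\alpha=3(\varepsilon-k)$, then $S\equiv 0$ and the identity holds trivially. For the converse the identity reads $\hbar(\overline{Y},\overline{W})\,S(\overline{X},\overline{Z})=\hbar(\overline{X},\overline{W})\,S(\overline{Y},\overline{Z})$. Here the hypothesis $n\geq 4$ is used essentially: on the $(n-1)$-dimensional horizontal complement of $\overline{\eta}$ the angular metric $\hbar$ is positive definite, and $n-1\geq 3$ ensures that for any nonzero $\overline{X}$ one can select a nonzero $\overline{W}$ in the $\hbar$-orthogonal complement of $\overline{X}$ and then take $\overline{Y}=\overline{W}$, producing $\hbar(\overline{X},\overline{W})=0$ and $\hbar(\overline{Y},\overline{W})=\hbar(\overline{W},\overline{W})>0$. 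The identity then forces $S(\overline{X},\overline{Z})=0$ for every $\overline{Z}$; since $S$ is indicatory this propagates to $S\equiv 0$, giving $B^{k}(\overline{X},\overline{Y})=\alpha\,\hbar(\overline{X},\overline{Y})$ after the obvious relabeling.

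The main technical obstacle is the opening simplification: one must carefully insert $\phi$ into each of the seven summands in the formula of Theorem~\ref{thm.1} and verify, factor by factor, which parts are annihilated by the combination of $\ell\circ\phi=0$, $\phi(\overline{\eta})=0$ and the various indicatory properties. Once the compact bilinear identity for $S$ is in hand, the remaining linear-algebra step is routine and the role of the dimension hypothesis $n\geq 4$ is transparent.
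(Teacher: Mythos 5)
Your proposal is correct and follows essentially the same route as the paper: project the scalar-curvature formula of Theorem~\ref{thm.1} to obtain $(\mathcal{P}\cdot\overcirc{R})(\overline{X},\overline{Y},\overline{Z},\overline{W})=\mathfrak{A}_{\overline{X},\overline{Y}}\{\hbar(\overline{Y},\overline{W})[\tfrac{1}{3}B^{k}(\overline{Z},\overline{X})+k\hbar(\overline{Z},\overline{X})]\}$ and compare with Definition~\ref{Hp-sca.}. The only difference is that you justify the final cancellation explicitly (the paper simply equates the bracketed factors); your orthogonal-vector argument works, though a contraction over $\overline{Y},\overline{W}$ gives the same conclusion for $n\geq 3$ without invoking positive definiteness of $\hbar$.
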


\begin{proof} Suppose that $(M,L)$ is a Finsler manifold of scalar curvature $k$ of dimension $n\geq
4$. Hence, by Theorem \ref{thm.1}, the $h$-curvature tensor
$\overcirc{R}$  has the form
\begin{eqnarray*}
   {\overcirc{R}}(\overline{X},\ov Y,\ov Z, \ov W)&=& \mathfrak{A}_{\overline{X},\ov Y}\{\hbar(\ov Y,\ov W)\{\ell(\ov Z)[k\ell(\overline{X})+\frac{1}{3}C^{k}(\overline{X})]
    +\frac{1}{3}B^{k}(\ov Z,\overline{X})\\
   && + \frac{2}{3}\ell(\overline{X})C^{k}(\ov Z)+k\hbar(\ov Z,\overline{X})\}+\frac{1}{3}\ell(\overline{X})C^{k}(\ov Y)\hbar(\ov Z,\ov W)\\
   &&+L^{-1}\hbar(\overline{X},\ov Z) \ell(\ov W)\,
   [k\ell(\ov Y)+\frac{1}{3}C^{k}(\ov Y)]\}.
\end{eqnarray*}
Applying the projection operator $\mathcal{P}$  on both
sides of the above equation, taking Remark \ref{rem.1} into account, we have
\begin{equation}\label{eq.14a}
    ({\mathcal{P} \cdot \overcirc{R}})(\overline{X},\ov Y,\ov Z, \ov W)=\mathfrak{A}_{\overline{X},\ov
    Y}\{\hbar(\ov Y,\ov W)\{\frac{1}{3}B^{k}(\ov Z,\overline{X})+k\hbar(\ov Z,\ov
    X)\}\}.
\end{equation}
\par
Now, if  $(M,L)$ is a Finsler manifold of $H_{p}$-scalar curvature
$\varepsilon$, then by Definition \ref{Hp-sca.}
\begin{equation}\label{eq.15a}
   (\mathcal{P}\cdot \overcirc{R})(\overline{X}, \ov Y, \ov Z, \ov
 W)=\varepsilon\,\mathfrak{A}_{\overline{X},\ov Y}\,\set{\hbar(\overline{X},\ov Z)\hbar(\ov Y,\ov
 W)}.
\end{equation}
Hence, from (\ref{eq.14a}) and (\ref{eq.15a}), we get
$$\varepsilon\hbar(\ov Z,\overline{X})=\frac{1}{3}B^{k}(\ov Z,\overline{X})+k\hbar(\ov Z,\ov
    X)$$
Consequently,
\begin{equation}\label{eq.16}
     B^{k}(\overline{X},\ov
Y)=3(\varepsilon-k)\hbar(\overline{X},\ov Y)=\alpha\hbar(\overline{X},\ov Y).
\end{equation}
\par
Conversely, suppose that the $\pi$-scalar form $B^{k}$ is given in the form
  (\ref{eq.16}), then,  using Equation (\ref{eq.14a}), one can show that  the $h$-curvature tensor $\overcirc{R}$
has the form
\begin{equation*}
   (\mathcal{P}\cdot \overcirc{R})(\overline{X}, \ov Y, \ov Z, \ov
 W)=\varepsilon\,\mathfrak{A}_{\overline{X},\ov Y}\,\set{\hbar(\overline{X},\ov Z)\hbar(\ov Y,\ov
 W)}.
\end{equation*}
Therefore, from Definition \ref{Hp-sca.}, $(M,L)$ is  a Finsler
manifold of $H_{p}$-scalar curvature.
\end{proof}

\begin{prop}\label{pro.2}A Finsler manifold of constant curvature $k$ of dimension $n\geq
4$ is a Finsler manifold of $H_{p}$-constant curvature
$\varepsilon=k$.
\end{prop}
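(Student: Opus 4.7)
The plan is to combine the characterization of constant curvature from Theorem \ref{thm.5} with the equivalence provided by Theorem \ref{thm.6}, effectively chaining the two results together.

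First, I would observe that since $(M,L)$ has constant curvature $k$, it is in particular of scalar curvature $k$, so Theorem \ref{thm.6} is applicable provided we can identify the form of $B^{k}$. By Theorem \ref{thm.5}, constant curvature is characterized by the vanishing of the $\pi$-scalar form $C^{k}$ (equivalently $B^{k}$). Hence $B^{k}(\overline{X},\overline{Y})=0$ identically on $\pi^{-1}(TM)$.

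Next, I would note that the vanishing $B^{k}=0$ can be trivially written in the indicatory form $B^{k}(\overline{X},\overline{Y}) = \alpha\,\hbar(\overline{X},\overline{Y})$ with $\alpha=0$. Plugging this into the hypothesis of Theorem \ref{thm.6}, the reverse direction of that theorem yields that $(M,L)$ is of $H_{p}$-scalar curvature $\varepsilon$, where $\alpha=3(\varepsilon-k)$. From $\alpha=0$ we immediately read off $\varepsilon=k$.

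Finally, since $k$ is assumed to be a (real) constant by the hypothesis of constant curvature, $\varepsilon=k$ is also constant, so by Definition \ref{Hp-sca.} the manifold $(M,L)$ is of $H_{p}$-constant curvature with value $\varepsilon=k$, which is the desired conclusion. There is no real obstacle here: the proof is essentially a one-line application of Theorems \ref{thm.5} and \ref{thm.6}; the only subtlety to double-check is that the dimension hypothesis $n\geq 4$ matches the applicability of Theorem \ref{thm.6}, which it does.
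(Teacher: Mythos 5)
Your proof is correct. The only point worth comparing: the paper does not route through Theorem \ref{thm.6} at all. Instead it notes that $C^{k}$ and $B^{k}$ vanish (immediately, since $k$ is constant and $C^{k}=L\stackrel{v}{D^{\circ}}k$), substitutes this into the explicit formula for $\overcirc{R}$ from Theorem \ref{thm.1}, and then applies the projection operator $\mathcal{P}$ directly, using Remark \ref{rem.1}, to read off $(\mathcal{P}\cdot\overcirc{R})(\overline{X},\overline{Y},\overline{Z},\overline{W})=k\,\mathfrak{A}_{\overline{X},\overline{Y}}\{\hbar(\overline{X},\overline{Z})\hbar(\overline{Y},\overline{W})\}$. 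Your version observes that $B^{k}=0$ is the special case $\alpha=0$ of the hypothesis $B^{k}=\alpha\hbar$ in Theorem \ref{thm.6}, whose converse direction then hands you $\varepsilon=k$ without redoing the projection computation; since Theorem \ref{thm.6} precedes this proposition in the paper, there is no circularity. Your route is shorter because Theorem \ref{thm.6} already encapsulates exactly the computation the paper repeats; the paper's route is self-contained at the level of Theorem \ref{thm.1} and makes the resulting form of $\mathcal{P}\cdot\overcirc{R}$ explicit. Both are valid, and your closing observation that constancy of $k$ forces constancy of $\varepsilon$ is the same final step the paper takes implicitly.
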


\begin{proof}If $(M,L)$ is a Finsler manifold of constant curvature $k$ of dimension $n\geq
4$, then the $\pi$-forms $C^{k}$ and $B^{k}$ vanish. Hence, by
Theorem \ref{thm.1}, the $h$-curvature tensor $\overcirc{R}$  has the
form
\begin{eqnarray*}
   {\overcirc{R}}(\overline{X},\ov Y,\ov Z, \ov W)&=& \mathfrak{A}_{\overline{X},\ov Y}\{k\hbar(\ov Y,\ov W)\{\ell(\ov Z)\ell(\overline{X})
   +\hbar(\ov Z,\overline{X})\}+kL^{-1}\hbar(\overline{X},\ov Z) \ell(\ov W)\,
   \ell(\ov Y)\}.
\end{eqnarray*}
Applying the projection operator $\mathcal{P}$  on both
sides of the above Equation, taking  the fact that the
angular metric tensor $\hbar$ is indicator and $\mathcal{P}\cdot \ell=0$ (Remark \ref{rem.1}) into account, we
obtain
\begin{equation*}
   (\mathcal{P}\cdot \overcirc{R})(\overline{X}, \ov Y, \ov Z, \ov
 W)=k\,\mathfrak{A}_{\overline{X},\ov Y}\,\set{\hbar(\overline{X},\ov Z)\hbar(\ov Y,\ov
 W)}.
\end{equation*}
Therefore,  $(M,L)$ is  a Finsler manifold of $H_{p}$-constant
curvature $\varepsilon=k$.
\end{proof}

\begin{rem}\label{rem.2}
The converse of the above proposition dose not hold, in general.
\end{rem}


\begin{prop}\label{pro.3} Let $(M,L)$ be a Finsler manifold of scalar curvature $k$. If $(M,L)$ is of $H_{p}$-scaler curvature
$\varepsilon$, then we have
\begin{description}
  \item[(a)]$3C^{\varepsilon}=2C^{k}$,
  \item[(b)]$3B^{\varepsilon}=2B^{k}$,
  \item[(c)]$3A^{\varepsilon}=2A^{k}$,
\end{description}
where $A^{\varepsilon}:=L\mathcal{P} \cdot
\stackrel{v}{D^{\circ}}B^{\varepsilon}, {\quad} A^{k}:=L\mathcal{P}
\cdot \stackrel{v}{D^{\circ}}B^{k}$.
Moreover, the above assertions are equivalent.
\end{prop}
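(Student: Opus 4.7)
Given that $(M,L)$ is, by hypothesis, simultaneously of scalar curvature $k$ and of $H_{p}$-scalar curvature $\varepsilon$, Theorem~\ref{thm.6} immediately yields the identity
$B^{k}(\overline{X},\overline{Y}) = 3(\varepsilon - k)\,\hbar(\overline{X},\overline{Y})$, and this relation will be the common starting point for establishing all three assertions as well as their equivalence.

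My plan for (a) is to apply $L\,\mathcal{P}\cdot\stackrel{v}{D^{\circ}}$ to both sides of $B^{k} = 3(\varepsilon - k)\,\hbar$. By the Leibniz rule, the right-hand side produces a term proportional to $(C^{\varepsilon} - C^{k})\otimes \hbar$ (using that $C^{f} = L\stackrel{v}{D^{\circ}}f$ is indicatory by Remark~\ref{rem.1}, so $C^{f}(\phi\overline{V}) = C^{f}(\overline{V})$) plus an extra term $3(\varepsilon - k)\,L\,\mathcal{P}\cdot\stackrel{v}{D^{\circ}}\hbar$ which, after unwinding $\hbar = g - \ell\otimes\ell$ and using the standard $\phi$-projection identities (in particular $\ell_{i}\phi^{i}_{a}=0$), reduces to a multiple of the Cartan tensor $T$. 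Contracting the resulting identity with $\overline{\eta}$ in a well-chosen slot annihilates the $\hbar$- and $T$-contributions by their indicator-vanishing property ($\hbar(\overline{\eta},\cdot)=0$, $T(\overline{\eta},\cdot,\cdot)=0$, $C^{f}(\overline{\eta})=0$), leaving exactly the scalar identity $3C^{\varepsilon} = 2C^{k}$.

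Once (a) is in hand, (b) and (c) are automatic: applying $L\,\mathcal{P}\cdot\stackrel{v}{D^{\circ}}$ to the identity $3C^{\varepsilon} = 2C^{k}$ gives $3B^{\varepsilon} = 2B^{k}$ by the very definition $B^{f} = L\,\mathcal{P}\cdot\stackrel{v}{D^{\circ}}C^{f}$, and one further application gives $3A^{\varepsilon} = 2A^{k}$. Read in reverse, together with the linearity of $L\,\mathcal{P}\cdot\stackrel{v}{D^{\circ}}$ and the indicatory character of $\tau := 3C^{\varepsilon} - 2C^{k}$ (respectively of $3B^{\varepsilon} - 2B^{k}$), the same differential hierarchy shows that (a), (b) and (c) are equivalent reformulations of the same relation between $\varepsilon$ and $k$ at successive orders of vertical differentiation.

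The main obstacle I anticipate is the handling of the term $\mathcal{P}\cdot\stackrel{v}{D^{\circ}}\hbar$ in the proof of (a): since the Berwald connection does not vertically annihilate the angular metric, one must carefully track the Cartan-tensor contribution through the $\phi$-projection and verify that, after the Euler contraction against $\overline{\eta}$, it cancels so that precisely the numerical factor $2/3$ emerges on the right-hand side. A secondary, more delicate point is the reverse direction in the equivalence: the kernel of $\mathcal{P}\cdot\stackrel{v}{D^{\circ}}$ on indicatory one-forms is not a priori trivial, so one must exploit the fact that $\tau$ arises as $L\stackrel{v}{D^{\circ}}(3\varepsilon - 2k)$ for a \emph{scalar} of degree zero (and hence carries more structure than a generic indicatory one-form) in order to rule out nonzero elements of that kernel under the standing hypothesis.
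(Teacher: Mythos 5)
Your computation up to the displayed identity
\begin{equation*}
A^{k}(\overline{Z},\overline{X},\overline{Y})
=3\set{C^{\varepsilon}(\overline{Z})-C^{k}(\overline{Z})}\hbar(\overline{X},\overline{Y})
+6L(\varepsilon-k)\,T(\overline{Z},\overline{X},\overline{Y})
\end{equation*}
is exactly what the paper obtains (its equation (3.4)), and your treatment of $\mathcal{P}\cdot\stackrel{v}{D^{\circ}}\hbar$ via the Cartan tensor is fine. The gap is in the step you propose next. Every tensor in this identity is indicatory: $A^{k}=L\mathcal{P}\cdot\stackrel{v}{D^{\circ}}B^{k}$ by construction, $\hbar$ and $T$ by Remark \ref{rem.1}, and $C^{f}(\overline{\eta})=L(\stackrel{v}{D^{\circ}}f)(\overline{\eta})=0$ for any $0^{+}$-homogeneous $f$. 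Consequently, contracting \emph{any} slot with $\overline{\eta}$ annihilates both sides simultaneously and yields $0=0$; it cannot isolate a relation between $C^{\varepsilon}$ and $C^{k}$. Worse, no purely algebraic manipulation of this single identity can produce $3C^{\varepsilon}=2C^{k}$, because the $C$-terms occur only as the coefficient of $\hbar$, and the asymmetric factor $2$ (rather than $3$) must come from somewhere else. That extra input is Lemma 3.1 of \cite{sca.}, namely
\begin{equation*}
\mathfrak{A}_{\overline{X},\overline{Y}}\set{A^{k}(\overline{X},\overline{Y},\overline{Z})
+C^{k}(\overline{X})\hbar(\overline{Y},\overline{Z})}=0,
\end{equation*}
which encodes the failure of $A^{k}$ to be symmetric in its first two arguments. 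Antisymmetrizing your identity in $\overline{X},\overline{Y}$ kills the (totally symmetric) $T$-term, and combining with the lemma gives $\mathfrak{A}_{\overline{X},\overline{Y}}\set{(3C^{\varepsilon}-2C^{k})(\overline{X})\hbar(\overline{Y},\overline{Z})}=0$; one then takes the contracted trace over the $\hbar$ slots (not an $\overline{\eta}$-contraction) to get $(n-2)(3C^{\varepsilon}-2C^{k})=0$ and concludes using $n\geq 4$.

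Parts (b) and (c) of your argument agree with the paper: one simply applies $L\mathcal{P}\cdot\stackrel{v}{D^{\circ}}$ to (a) and then to (b). For the equivalence, your worry about the kernel of $\mathcal{P}\cdot\stackrel{v}{D^{\circ}}$ is the right instinct, but the resolution is again the same lemma: assuming $3A^{\varepsilon}=2A^{k}$, Lemma 3.1 of \cite{sca.} applied to both $A^{\varepsilon}$ and $A^{k}$ yields $\mathfrak{A}_{\overline{X},\overline{Y}}\set{(3C^{\varepsilon}-2C^{k})(\overline{X})\hbar(\overline{Y},\overline{Z})}=0$ directly, and the trace argument closes the loop. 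Without identifying this symmetry identity as the working tool, both the forward step (a) and the reverse implication (c)$\Rightarrow$(a) remain unproved.
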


\begin{proof}~\par

\vspace{4pt}
 \noindent\textbf{(a)}
  Suppose that $(M,L)$ is a Finsler manifold of scalar curvature $k$
  and of $H_{p}$-scaler curvature $\varepsilon$. Then, by Theorem
  \ref{thm.6}, we have
  $$B^{k}(\overline{X},\ov
Y)=3(\varepsilon-k)\hbar(\overline{X},\ov Y).$$ Taking the vertical covariant
derivative on both sides of the above equation, we get
\begin{equation*}
   (\stackrel{v}{D^{\circ}}B^{k})(\ov Z, \overline{X},\ov
Y)=3(\stackrel{v}{D^{\circ}}\varepsilon-\stackrel{v}{D^{\circ}}k)(\ov
Z)\hbar(\overline{X},\ov Y)+3(\varepsilon-k)(\stackrel{v}{D^{\circ}}\hbar)(\ov
Z,\overline{X},\ov Y).
\end{equation*}
Applying the projection operator $\mathcal{P}$  on both
sides of the above equation and multiplying the resulting by $L$, we
obtain
\begin{eqnarray*}
   L(\mathcal{P}\cdot \stackrel{v}{D^{\circ}}B^{k})(\ov Z, \overline{X},\ov
Y)&=&3L(\mathcal{P}\cdot\stackrel{v}{D^{\circ}}\varepsilon-\mathcal{P}\cdot\stackrel{v}{D^{\circ}}k)(\ov
Z)(\mathcal{P}\cdot\hbar)(\overline{X},\ov
Y)\\
&&+6L(\varepsilon-k)\{\mathcal{P}\cdot(T-L^{-1}\hbar\otimes\ell)\}(\ov
Z,\overline{X},\ov Y).
\end{eqnarray*}
In view of  Remark \ref{rem.1}, the above equation reduces to
\begin{eqnarray}\label{eq.18}
   A^{k}(\ov Z, \overline{X},\ov
Y)&=&3\{C^{\varepsilon}(\ov Z)-C^{k}(\ov Z)\}\hbar(\overline{X},\ov
Y)+6L(\varepsilon-k)T(\ov Z,\overline{X},\ov Y).
\end{eqnarray}
On the other hand, using Lemma 3.1 of \cite{sca.}, we have
$$\mathfrak{A}_{\overline{X},\ov Y}\set{A^{k}(\overline{X},\ov Y,\ov Z)+C^{k}(\overline{X})\hbar(\ov Y, \ov Z)}=0,$$
Now, from which together with (\ref{eq.18}), it follows that
$$\mathfrak{A}_{\overline{X},\ov Y}\set{\{3C^{\varepsilon}(\overline{X})-2C^{k}(\overline{X})\}\phi(\ov Y)}=0,$$
 Taking the contracted trace with respect to $\ov Y$, we obtain
\begin{equation*}
(n-2)\{3C^{\varepsilon}(\overline{X})-2C^{k}(\overline{X})\}=0.
\end{equation*}
Hence,
\begin{equation}\label{eq.19}
3C^{\varepsilon}=2C^{k}   \qquad\qquad\qquad(as\,\,    n\geq4).
\end{equation}

\vspace{4pt}
 \noindent\textbf{(b)} Taking the vertical covariant
derivative on both sides of  (\ref{eq.19}), we get
\begin{equation*}
3 \stackrel{v}{D^{\circ}}C^{\varepsilon}=2
\stackrel{v}{D^{\circ}}C^{k}.
\end{equation*}
Applying the projection operator $\mathcal{P}$  on both
sides of the above equation and then multiplying by $L$, we obtain
\begin{equation*}
3B^{\varepsilon}=2B^{k}.
\end{equation*}

\vspace{4pt}
 \noindent\textbf{(c)} The proof can be done in a similar manner as  the proof of \textbf{(b)}.

\bigskip

\textit{\textbf{Now, we prove the equivalence.}}

\vspace{4pt}
 \noindent\textbf{{(a)}\, $\Rightarrow${(b)}\, $\Rightarrow$(c):}
 The proof is obvious.

\vspace{4pt}
 \noindent\textbf{{(c)}\, $\Rightarrow${(a)}:} Suppose that
 $3A^{\varepsilon}=2A^{k}$ holds. Hence, using Lemma 3.1 of \cite{sca.},
 we have
$$\mathfrak{A}_{\overline{X},\ov Y}\set{\{3C^{\varepsilon}(\overline{X})-2C^{k}(\overline{X})\}\hbar(\ov Y,\ov Z)}=0$$
From which, the result follows.
\end{proof}

\begin{lem}\cite{sca.} \label{pro.5} A Finsler manifold of scalar curvature $k$ is of constant curvature if, and only
if, $\mathcal{P}\cdot F=0$, where $F$ is the $\pi$-form defined by
\begin{eqnarray*}
     F(\overline{X},\overline{Y})&:=&\frac{1}{3}\set{B(\overline{X},\overline{Y})+2C(\overline{X})\ell(\overline{Y})} \label{eq.5}.
 \end{eqnarray*}
\end{lem}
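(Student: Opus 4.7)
The strategy is short: compute $\mathcal{P}\cdot F$ explicitly, recognise it as a scalar multiple of $B^{k}$, and then invoke Theorem \ref{thm.5}, which characterises constant curvature by the vanishing of $B^{k}$ (equivalently $C^{k}$).

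Applying Definition \ref{ind.} to the $\pi$-form $F$ (regarded as being of type $(0,2)$) gives
\begin{equation*}
(\mathcal{P}\cdot F)(\overline{X},\overline{Y})
= \tfrac{1}{3}\bigl\{B^{k}(\phi(\overline{X}),\phi(\overline{Y})) + 2\,C^{k}(\phi(\overline{X}))\,\ell(\phi(\overline{Y}))\bigr\}.
\end{equation*}
Remark \ref{rem.1} asserts that both $B^{k}$ and $C^{k}$ are indicatory, so $B^{k}(\phi(\overline{X}),\phi(\overline{Y}))=B^{k}(\overline{X},\overline{Y})$ and $C^{k}(\phi(\overline{X}))=C^{k}(\overline{X})$. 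The remaining factor is handled by the identity $\mathcal{P}\cdot\ell=0$ from the same remark, equivalently by the direct check $\ell(\phi(\overline{Y}))=\ell(\overline{Y})-L^{-1}\ell(\overline{Y})\ell(\overline{\eta})=0$, using $\ell(\overline{\eta})=L$. These two collapses reduce the displayed identity to
\begin{equation*}
\mathcal{P}\cdot F \,=\, \tfrac{1}{3}\,B^{k}.
\end{equation*}

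With this simplification, both implications follow immediately. If $(M,L)$ is of constant curvature, then Theorem \ref{thm.5} gives $B^{k}=0$, whence $\mathcal{P}\cdot F=0$. Conversely, $\mathcal{P}\cdot F=0$ forces $B^{k}=0$ by the identity above, and a second application of Theorem \ref{thm.5} yields that $(M,L)$ is of constant curvature.

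There is no substantive obstacle here; the lemma is essentially a bookkeeping exercise with the projection operator $\mathcal{P}$, and the only place where one must pause is to verify that $\ell\circ\phi$ vanishes — a fact encoded in Remark \ref{rem.1} and directly confirmed by $\ell(\overline{\eta})=L$.
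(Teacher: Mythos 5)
Your argument is correct. Note that the paper itself offers no proof of this lemma --- it is imported from the cited reference \cite{sca.} --- so there is nothing internal to compare against; but your computation is exactly the natural one: Definition \ref{ind.} gives $(\mathcal{P}\cdot F)(\overline{X},\overline{Y})=F(\phi(\overline{X}),\phi(\overline{Y}))$, the indicatory property of $B^{k}$ and $C^{k}$ together with $\ell(\phi(\overline{Y}))=\ell(\overline{Y})-L^{-1}\ell(\overline{Y})\ell(\overline{\eta})=0$ (since $\ell(\overline{\eta})=L$) collapse this to $\tfrac{1}{3}B^{k}$, and Theorem \ref{thm.5} then gives both implications. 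The only reading you had to supply --- that the $B$ and $C$ in the definition of $F$ are the $B^{k}$ and $C^{k}$ of Theorem \ref{thm.1} --- is the only sensible one in context, so the proof stands as written.
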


\begin{thm}\label{thm.8} Let $(M,L)$ be a Finsler manifold of scalar curvature $k$. If $(M,L)$ is of $H_{p}$-scaler curvature $\varepsilon$, then, the following properties are equivalent:

\begin{description}
  \item[(a)] $\varepsilon=k$.
  \item[(b)]$(M,L)$ is of constant curvature $k$.
  \item[(c)]$(M,L)$ is of $H_{p}$-constant curvature $\varepsilon$.
  \item[(d)]$\mathcal{P}\cdot F=0$
\end{description}
\end{thm}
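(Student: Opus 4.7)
The plan is to close the cycle $(a)\Rightarrow(b)\Rightarrow(c)\Rightarrow(d)\Rightarrow(a)$, using each of Theorems \ref{thm.1}, \ref{thm.5}, \ref{thm.6}, Propositions \ref{pro.2}, \ref{pro.3} and Lemma \ref{pro.5} in turn. The central observation is that Theorem \ref{thm.6} ties $\varepsilon-k$ to $B^{k}$ via $B^{k}=3(\varepsilon-k)\hbar$, and that Theorem \ref{thm.5} detects constant curvature through the vanishing of $C^{k}$ (or $B^{k}$). Together with Proposition \ref{pro.3} (which relates $C^{\varepsilon}$ to $C^{k}$), these bridges let each condition be read off from the others.

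For $(a)\Rightarrow(b)$, I would substitute $\varepsilon=k$ into Theorem \ref{thm.6} to obtain $B^{k}(\overline{X},\overline{Y})=3(\varepsilon-k)\hbar(\overline{X},\overline{Y})=0$; Theorem \ref{thm.5} then forces $(M,L)$ to be of constant curvature $k$. For $(b)\Rightarrow(c)$, Proposition \ref{pro.2} shows that a Finsler manifold of constant curvature $k$ is of $H_{p}$-constant curvature $k$; to reconcile this with the given $H_{p}$-scalar curvature $\varepsilon$, I would invoke Theorem \ref{thm.6}, which (since $B^{k}=0$ under (b)) yields $3(\varepsilon-k)\hbar=0$, hence $\varepsilon=k$ because $\hbar\neq 0$ in dimension $n\geq 4$. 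Thus $\varepsilon$ is constant and (c) holds.

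For $(c)\Rightarrow(d)$, assume $\varepsilon$ is constant. Then $C^{\varepsilon}=L\stackrel{v}{D^{\circ}}\varepsilon=0$, and Proposition \ref{pro.3}(a) gives $C^{k}=0$; by Theorem \ref{thm.5} this already places $(M,L)$ in the constant-curvature class, so Lemma \ref{pro.5} delivers $\mathcal{P}\cdot F=0$. Alternatively, $B^{k}=0$ (by Proposition \ref{pro.3}(b)) and $C^{k}=0$ give $F=\tfrac{1}{3}\{B^{k}+2C^{k}\otimes\ell\}=0$ directly. Finally, for $(d)\Rightarrow(a)$, Lemma \ref{pro.5} converts $\mathcal{P}\cdot F=0$ back into constant curvature of $(M,L)$, so $B^{k}=0$ by Theorem \ref{thm.5}, and Theorem \ref{thm.6} then reads $3(\varepsilon-k)\hbar=0$, forcing $\varepsilon=k$.

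The whole argument is a bookkeeping exercise on the relations already proved, so there is no deep obstacle. The only subtle point, which I would be careful to spell out, is the step where $3(\varepsilon-k)\hbar=0$ is used to conclude $\varepsilon=k$: this requires the non-vanishing of $\hbar$, which follows from $\dim M=n\geq 4$ together with the rank properties of the angular metric (it is a non-degenerate bilinear form on the $(n-1)$-dimensional orthogonal complement of $\overline{\eta}$). Once this is noted, each implication is a one-line invocation of a previously established result.
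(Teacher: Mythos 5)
Your proposal is correct and follows essentially the same route as the paper: both rest on the identity $B^{k}=3(\varepsilon-k)\hbar$ from Theorem \ref{thm.6}, the vanishing criterion of Theorem \ref{thm.5}, the relation $3C^{\varepsilon}=2C^{k}$ from Proposition \ref{pro.3}, and Lemma \ref{pro.5} for item (d); the only difference is that you arrange the implications in a cycle while the paper proves the pairwise equivalences $(a)\Leftrightarrow(b)$, $(b)\Leftrightarrow(c)$, $(b)\Leftrightarrow(d)$. Your explicit remark that $\hbar\neq 0$ is needed to pass from $3(\varepsilon-k)\hbar=0$ to $\varepsilon=k$ is a point the paper leaves implicit.
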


\begin{proof}Assume that  $(M,L)$ is a Finsler manifold of scalar curvature $k$ and of $H_{p}$-scaler curvature
$\varepsilon$.

\noindent\textbf{(a) $\Longleftrightarrow $(b):}  By making use of  Theorem \ref{thm.6}, we get
\begin{equation}\label{eq.21}
   B^{k}(\overline{X},\ov
Y)=3(\varepsilon-k)\hbar(\overline{X},\ov Y).
\end{equation}
Now, if $(M,L)$ is of constant curvature $k$, then
$$B^{k}(\overline{X},\ov Y)=0$$
From which together with (\ref{eq.21}), it follows that
$\varepsilon=k$.
\par
Conversely, If $(M,L)$ is a Finsler manifold of scalar curvature $k$
and of $H_{p}$-scaler curvature $\varepsilon$ with $\varepsilon=k$.
Then due to (\ref{eq.21}), $B^{k}$ vanishes. Hence, by Theorem
\ref{thm.5}, $(M,L)$ is of constant curvature $k$.

\noindent\textbf{(b)$\Longleftrightarrow $(c):}
Firstly, suppose that $(M,L)$ is a Finsler manifold of scalar curvature $k$ and of $H_{p}\,$-scaler curvature
$\varepsilon$. If $(M,L)$ is of constant curvature $k$, then,  by
Theorem \ref{thm.8}, it follows that $\varepsilon=k$. Hence, $(M,L)$
is of  $H_{p}\,$-constant curvature.
\par
Conversely, If $(M,L)$ is a Finsler manifold  of  $H_{p}\,$-constant
curvature $\varepsilon$. Then, $C^{\varepsilon}$ vanishes. Hence, by
Proposition \ref{pro.3}, $C^{k}$ vanishes. Consequently, by Theorem
\ref{thm.5}, $(M,L)$ is of constant curvature $k$.

\noindent\textbf{(b)$\Longleftrightarrow $(d):} This is a direct consequence of   Lemma  \ref{pro.5}.
\end{proof}

\begin{thm}\label{thm.11} Let $(M,L)$ be a  Finsler manifold of scalar curvature $k$.  Then,  $(M,L)$ is of vanishing $H_{p}$-scaler curvature
 if, and only
if, $\mathcal{P}\cdot N=0$, where $N$ is the $\pi$-form defined
by \begin{eqnarray*}
   N(\overline{X},\overline{Y}):&=&k\set{g(\overline{X},\overline{Y})+\ell(\overline{X})\ell(\overline{Y})}\nonumber\\
   &&+\frac{1}{3}\set{B(\overline{X},\overline{Y})+2\ell(\overline{X})C(\overline{Y})+2C(\overline{X})\ell(\overline{Y})},\label{eq.4}
      \end{eqnarray*}
\end{thm}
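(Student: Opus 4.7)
The plan is to reduce the statement to a direct application of Theorem \ref{thm.6} by computing $\mathcal{P}\cdot N$ explicitly. Since $(M,L)$ is of scalar curvature $k$, Theorem \ref{thm.6} characterises the $H_p$-scalar curvature condition as $B^{k}(\overline{X},\overline{Y}) = 3(\varepsilon-k)\hbar(\overline{X},\overline{Y})$. The vanishing case $\varepsilon=0$ therefore corresponds to $B^{k} = -3k\,\hbar$, or equivalently $k\hbar + \tfrac{1}{3}B^{k} = 0$. So the whole task reduces to establishing the identity
$$(\mathcal{P}\cdot N)(\overline{X},\overline{Y}) \;=\; k\,\hbar(\overline{X},\overline{Y}) + \tfrac{1}{3}B^{k}(\overline{X},\overline{Y}),$$
where one reads the unsubscripted $B$, $C$ in the statement of $N$ as $B^{k}$, $C^{k}$ (in accordance with Theorem \ref{thm.1}).

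First I would project each of the five summands of $N$ separately, using the indicatory machinery of Remark \ref{rem.1}. The observation $\ell\circ\phi = 0$ (which follows from $\ell(\overline{\eta})=L$ together with the definition of $\phi$) kills every summand containing a factor of $\ell$: both cross terms $\ell(\overline{X})C(\overline{Y})$ and $C(\overline{X})\ell(\overline{Y})$, and also $\ell(\overline{X})\ell(\overline{Y})$. A short expansion yields $g(\phi(\overline{X}),\phi(\overline{Y})) = \hbar(\overline{X},\overline{Y})$, using $g(\overline{\eta},\overline{X}) = L\,\ell(\overline{X})$ and $g(\overline{\eta},\overline{\eta}) = L^{2}$ together with the definition of $\hbar$. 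Finally, since $B^{k}$ is indicatory by Remark \ref{rem.1}, $\mathcal{P}\cdot B^{k} = B^{k}$. Collecting these observations delivers the displayed identity.

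To finish, $\mathcal{P}\cdot N = 0$ is equivalent to $B^{k} = -3k\,\hbar$, and by Theorem \ref{thm.6} this is exactly the condition that $(M,L)$ is of $H_{p}$-scalar curvature with $\varepsilon = 0$, i.e., of vanishing $H_{p}$-scalar curvature. The argument has no real obstacle; the only thing to be careful about is the bookkeeping under $\mathcal{P}$, i.e., verifying that the coefficients appearing in $N$ (namely the $+\ell\ell$ inside the $k$-term and the $2\ell C$, $2C\ell$ inside the $\tfrac{1}{3}$-term) are arranged precisely so that all $\ell$-contributions cancel, leaving the clean combination $k\hbar + \tfrac{1}{3}B^{k}$ that Theorem \ref{thm.6} is built to recognise.
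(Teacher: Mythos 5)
Your proposal is correct, and the computational core is the same as the paper's: both proofs rest on the identity $(\mathcal{P}\cdot N)(\overline{X},\overline{Y})=k\,\hbar(\overline{X},\overline{Y})+\tfrac{1}{3}B^{k}(\overline{X},\overline{Y})$, obtained exactly as you describe ($\ell\circ\phi=0$ kills every $\ell$-term, $g\circ(\phi\times\phi)=\hbar$, and $B^{k}$ is indicatory). Where you diverge is in how you connect this identity to the vanishing of the $H_p$-scalar curvature. You reduce everything to Theorem \ref{thm.6} specialized at $\varepsilon=0$ (so that $\mathcal{P}\cdot N=0 \Leftrightarrow B^{k}=-3k\hbar \Leftrightarrow$ vanishing $H_p$-sc), which is a clean and economical delegation: the work of extracting the scalar $\varepsilon$ from the projected curvature tensor has already been done there. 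The paper instead argues directly from Theorem \ref{thm.1}: it rewrites $\mathcal{P}\cdot\overcirc{R}$ as $\mathfrak{A}_{\overline{X},\overline{Y}}\{\phi(\overline{Y})(\mathcal{P}\cdot N)(\overline{Z},\overline{X})\}$ and, for the forward direction, takes a contracted trace over $\overline{Y}$ to get $(n-2)\,\mathcal{P}\cdot N=0$; the converse is immediate from the same display. The trade-off is minor: your route inherits the hypothesis $n\geq 4$ from Theorem \ref{thm.6}, while the paper's trace argument only needs $n\geq 3$ --- but since $H_p$-scalar curvature is itself only defined for $n\geq 4$, nothing is lost. Both arguments are sound; yours is shorter because it reuses Theorem \ref{thm.6} rather than re-deriving the curvature-level equivalence.
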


\begin{proof}If $(M,L)$ is a Finsler manifold of scalar curvature
$k$. Then, by Theorem \ref{thm.1}, we have
\begin{equation}\label{eq.23}
    ({\mathcal{P} \cdot \overcirc{R}})(\overline{X},\ov Y,\ov Z, \ov W)=\mathfrak{A}_{\overline{X},\ov 
    Y}\{\hbar(\ov Y,\ov W)\{\frac{1}{3}B^{k}(\ov Z,\overline{X})+k\hbar(\ov Z,\ov
    X)\}\}.
\end{equation}
On the other hand, by using the definition of $N$, we obtain
\begin{equation*}
    (\mathcal{P} \cdot N)(\overline{X},\ov Y)=\frac{1}{3}B^{k}(\ov Z,\overline{X})+k\hbar(\ov Z,\ov
    X).
\end{equation*}
From which together with (\ref{eq.23}), we get
\begin{equation}\label{eq.24}
    (\mathcal{P} \cdot \overcirc{R})(\overline{X},\ov Y,\ov Z)=\mathfrak{A}_{\overline{X},\ov
    Y}\{\phi(\ov Y)(\mathcal{P} \cdot N)(\ov Z,\ov
    X)\}.
\end{equation}
Now, if $(M,L)$ is of vanishing $H_{p}$-scalar curvature, then , from
(\ref{eq.24}), it follows that
$$\mathfrak{A}_{\overline{X},\ov Y}\{\phi(\ov Y)(\mathcal{P} \cdot N)(\ov Z,\overline{X})\}.$$
Taking the contracted trace with respect to $\ov Y$, the above
relation reduces to
$$(n-2)(\mathcal{P} \cdot N)(\ov Z,\overline{X})=0.$$
Consequently, as $n\geq3$, $\mathcal{P}\cdot N$ vanishes.
\par
Conversely, If $(M,L)$ is a Finsler manifold  of  scalar curvature
$k$ such that $\mathcal{P}\cdot N=0$. Hence, from (\ref{eq.24}),
$\mathcal{P}\cdot \overcirc{R}$ vanishes. Consequently, $(M,L)$ is of vanishing $H_{p}$-scalar curvature.
\end{proof}


\section{Examples}
In this section, we give various   examples. Although  the definition of the $H_p$-scalaer curvature manifolds requires  that $n\geq 4$, we give special cases of Example 6  with lower dimensions. This only is to show that the converse of some  implications are not, generaly, true.    For instance,  in Example 6, when $n= 2$ we show that the converse of Proposition \ref{pro.2} is not generally true. That is, not every Finsler manifold of $H_p$-constant curvature is of constant curvature. Also,  in dimension 3, we show that not  every Finsler manifold of $H_p$-scalar curvature is of scalar curvature.

\begin{example} \cite{Shen-book} The family of Riemannian  metrics
$${L}_\mu=\frac{\sqrt{|y|^2+\mu(|x|^2|y|^2-\langle x,y\rangle^2)}}{1+\mu|x|^2},\,\, x\in \mathbb{B}^n(r_\mu), \,\, y\in T_x\mathbb{B}^n(r_\mu) \cong\mathbb{R}^n, $$
where $|.|$ and $\langle .,.\rangle$ are the standard Euclidean norm and inner product in $\Real^n$, $r_\mu=1/\sqrt{-\mu}$ if $\mu<0$ and $r_\mu=\infty$ if $\mu\geq 0$. This class is of constant curvature $\mu$. So it is also of $H_p$-constant curvature $\mu$. As a verification, we have the following.
 The spray coefficients $G^i$ of $L_\mu$ are given by
$$G^i=-\frac{\mu\langle x,y \rangle}{1+\mu|x|^2}y^i.$$
From now on, we denote by $\partial_{i}$ and $\dot{\partial}_{i}$  the partial differentiation with respect to the coordinates $x^i$ and $y^i$ respectively and by $(x^i,y^i)$  the induced  coordinates on the  $TM$.

Firstly, let us compute the non linear connection $G^i_j$. Since, $G^i_j=\dot{\partial}_jG^i$, one has
$$G^i_j=-\mu\frac{y^ix^\ell\delta_{\ell j}+\langle x,y\rangle\delta^i_j}{1+\mu|x|^2}.$$
The Berwald connection $G^i_{jk}=\dot{\partial}_kG^i_j$, are given by
$$G^i_{jk}=-\mu\frac{x^\ell\delta_{\ell j}\delta^i_k+x^\ell\delta_{\ell k}\delta^i_j}{1+\mu|x|^2}.$$
To calculate the h-curvature, \,$\overcirc{R}^{\,\,i}_{h\,\,jk}$, of Berwald connection, we have
$$\partial_{h}G^i_{jk}=-\mu\frac{(1+\mu|x|^2)(\delta^i_{k}\delta_{jh}+\delta_{kh}\delta^i_j)-2\mu(x^\ell\delta_{\ell j}\delta^i_k+x^\ell\delta_{\ell k}\delta^i_j)x^m\delta_{m h }}{(1+\mu|x|^2)^2}.$$

Now, $$\overcirc{R}^{\,\,i}_{h\,\,jk}=\mathfrak{A}_{j,k}\{\delta_jG^i_kh+G^i_{mk}G^m_{hj}\}, \quad  \delta_k=\partial_k-N^j_k\dot{\partial}_j,$$ we get
$$\overcirc{R}^{\,\,i}_{h\,\,jk}=\mu(g_{hk}\delta^i_j-g_{hj}\delta^i_k).$$
Applying the projection operator on \, $\overcirc{R}^{\,\,i}_{h\,\,jk}$, we get
$$\mathcal{P} \cdot \overcirc{R}^{\,\,i}_{h\,\,jk}=\overcirc{R}^{\,\,i}_{h\,\,jk}h^h_ah^j_bh^k_ch^d_i=\mu(h_{ac}h^d_b-g_{ab}h^d_c),$$
where $h_{ij}$ is the angular metric. This  gives   $\varepsilon=\mu$.
\end{example}

 \begin{example}\cite{Funk-metric} The Funk metric on the unit ball $\mathbb{B}^n$ in $\Real^n$ is given by
$${L}=\frac{\sqrt{|y|^2-(|x|^2|y|^2-\langle x,y\rangle^2)}}{1-|x|^2}+\frac{\langle x,y\rangle}{1-|x|^2}.$$
The Funk metric has constant curvature $k=-\frac{1}{4}$ and hence has Hp-constant curvature $-\frac{1}{4}$.
\end{example}

\begin{example} \cite{Mu-Elgendi}
The class
$$
L_\phi=\frac{\langle a,y\rangle}{(1+\langle a,x\rangle)^2}\phi(z^1,z^2,...,z^n), \quad z^i=\frac{(1+\langle a,x\rangle)y^i-\langle a,y \rangle x^i}{\langle a,y \rangle},
$$
where $a=(a^1,a^2,...,a^n)$ is a constant vector in $\mathbb{R}^n$ and   $\phi$ is an arbitrary function in the sense  that $L_\phi$ is a Finsler function.
 The spray coefficients are given by $$G^i=-\frac{\langle a,y \rangle}{1+\langle a,x \rangle}y^i. $$
 The class $L_\phi$ is a class of Finsler metric of zero constant curvature and hence it is also of zero $H_{p}$-curvature.
  As in Example 1, straightforward calculations lead to the following.
$$G^i_j=-\frac{y^ia^\ell \delta_{\ell j}+\langle a,y\rangle\delta^i_j}{1+\langle a,x\rangle}, \quad
G^i_{jk}=-\mu\frac{a^\ell \delta_{\ell j}\delta^i_k+a^\ell \delta_{\ell k}\delta^i_j}{1+\langle a,x\rangle},$$
$$\partial_{h}G^i_{jk}=\frac{(a^\ell \delta_{\ell j}\delta^i_k+a^\ell \delta_{\ell k}\delta^i_j)a^m \delta_{m h}}{(1+\langle a,x\rangle)^2}, \quad
\overcirc{R}^{\,\,i}_{h\,\,jk}=0.$$
Consequently, applying the projection operator on \,$ \overcirc{R}^{\,\,i}_{h\,\,jk}$ gives
$$\mathcal{P} \cdot \overcirc{R}^{\,\,i}_{h\,\,jk}=0,$$
which means that $\varepsilon=0$.
\end{example}

As a special case of the above example, we choose $\phi$ as the following example.
\begin{example}
Let $M=\mathbb{R}^n$, and
$$L=\frac{\langle a,y\rangle}{(1+\langle a,x\rangle)^2}\left(|z|^2+e^{-|z|^2}\right)^{\frac{1}{2}}. $$
  $F$ is  a Finsler metric of zero  constant  curvature  and hence zero Hp-constant curvature.
\end{example}

\begin{example}\cite{Shen-example}
Let $M=\mathbb{B}^n(1/\sqrt{|a|})$.
$${L}=\frac{\sqrt{(1-|a|^2|x|^4)|y|^2+(|x|^2\langle a,y\rangle-2\langle a,x\rangle \langle x,y\rangle)^2}-(|x|^2\langle a,y\rangle-2\langle a,x\rangle \langle x,y\rangle)}{1-|a|^2|x|^4}.$$

$L$ is a Finsler metric of scalar curvature $k=3\frac{\langle a,y\rangle}{F}+3\langle a,x\rangle^2-2|a|^2|x|^2$.

Now to decide whether the space $(M,L)$ is of $H_p$-scalar curvature, we have to calculate the tensor $k_{ij}:=F\dot{\partial}_j(F\dot{\partial}_ik)$.
$$\dot{\partial}_ik=3\frac{Fa_i-\langle a,y\rangle \ell_i}{F^2}.$$
Hence,
$$k_{ij}=-3a_i\ell_j-3\frac{\langle a,y\rangle }{F^2}h_{ij}.$$
Applying the projection operator on $k_{ij}$, we have
$$B_{ij}=\mathcal{P}\cdot k_{ij}=k_{ab}h^a_i h^b_j=-3\frac{\langle a,y\rangle }{F}h_{ij}.$$

Therefore, by using Theorem \ref{thm.6}, the space $(M,L)$ is of Hp-scalar curvature. Here the Hp-scalar curvature $\varepsilon$ is given by $$\varepsilon=\frac{2\langle a,y\rangle }{F}+3\langle a,x\rangle^2-2|a|^2|x|^2.$$

\end{example}

\begin{example}

Let $M=\mathbb{R}^n$ and
$$L=f(x^1)|y|,$$
where $f(x^1)$ is an arbitrary smooth positive  function on $\mathbb{R}$ and $|y|$ is the standard norm on $\mathbb{R}^n$.
The metric tensor is given by
$$g_{ij}=f(x^1)^2\delta_{ij},\quad g^{ij}=f(x^1)^{-2}\delta^{ij}, \quad h_{ij}=f(x^1)^2(|y|\delta_{ij}-y_iy_j).$$
The Berwald connection is given by
$$G^h_{ij}=\phi(x^1)(\delta_{1i}\delta^h_j+\delta_{1j}\delta^h_i-\delta_{ij}\delta^h_1), \quad\phi(x^1):=\frac{f'(x^1)}{f(x^1)}, \quad f'(x^1):=\frac{df(x^1)}{dx^1}.$$
By differentiating $G^h_{ij}$  in the following sense $$\partial_k G^h_{ij}=\phi'(x^1)\delta_{1k}(\delta_{1i}\delta^h_j+\delta_{1j}\delta^h_i-\delta_{ij}\delta^h_1),\quad \phi'(x^1):=\frac{d\phi(x^1)}{dx^1},$$
the h-curvature of Berwald connection is given by
$$\overcirc{R}^{\,\,h}_{i\,\,jk}=\mathfrak{A}_{j,k}\{\phi'(\delta_{1i}\delta_{1j}\delta^h_k+\delta_{ij}\delta_{1k}\delta^h_1)+\phi^2(\delta^h_k\delta_{ij}+\delta_{1i}\delta_{1k}\delta^h_j+\delta_{1j}\delta_{ik}\delta^h_1)\}.$$

In dimension $3$, by using Maple program and NF-package \cite{CFG}, one can see that the space $(M,L)$ is not of scalar curvature. But it satisfies the condition of the Hp-scalar curvature. Namely,
$$\mathcal{P}\cdot\overcirc{R}^{\,\,h}_{i\,\,jk}=\overcirc{R}^{\,\,a}_{b\,\,cd}=-\frac{1}{L^2}(\phi'((y^2)^2+(y^3)^2)+\phi (y^1)^2)(h^a_ch_{bd}-h^a_dh_{bc}).$$
Hence, $\varepsilon=-\frac{1}{L^2}(\phi'(((y^2)^2+(y^3)^2)+\phi (y^1)^2)$.

In dimension $2$, by using Maple program and NF-package, one can see that the space $(M,L)$ is of scalar curvature $k=\frac{f'^2-ff''}{f^4}$. But applying the projection operator on the h-curvature gives
$$\mathcal{P}\cdot\overcirc{R}^{\,\,h}_{i\,\,jk}=0.$$
Hence, $\varepsilon=0$. This case shows that the converse of Proposition \ref{pro.2} is not generally true.

\end{example}

\section*{Acknowledgment}
The authors are grateful for Professor Nabil Youssef for his continuous help and encouragement.


\providecommand{\bysame}{\leavevmode\hbox
to3em{\hrulefill}\thinspace}
\providecommand{\MR}{\relax\ifhmode\unskip\space\fi MR }
\providecommand{\MRhref}[2]{%
  \href{http://www.ams.org/mathscinet-getitem?mr=#1}{#2}
} \providecommand{\href}[2]{#2}


\begin{thebibliography}{10}

\bibitem{r58}
H.~Akbar-Zadeh, \emph{Initiation to global Finsler geometry},
Elsevier, 2006.


\bibitem{r14}
H.~Akbar-Zadeh, \emph{Sur les espaces de Finsler isotropes}, C. R.
Acad. Sc.
  Paris, s\'erie \textbf{A} (1979), 53--56.

\bibitem{Shen-example}
    D. Bao and C. Robles, \emph{A Sampler of Riemann–Finsler Geometry}, Cambridge Univ. Press,  2004, pp. 329--330.


\bibitem{Shen-book}
S. S. Chern and Z. Shen, \emph{Riemann-Finsler Geometry}, World Scientific Publishers 2004.

\bibitem{Mu-Elgendi}
S. G. Elgendi and Z. Muzsnay, \emph{Freedom of h (2)-variationality and metrizability of sprays}, Diff. Geom.  Appl., \textbf{54} A (2017), 194--207.


\bibitem{r75}
H.~Izumi and T.~N. Srivastava, \emph{On $\textsc{R}_{3}$-like
Finsler spaces}, Tensor, N. S., \textbf{32} (1978), 339--349.

\bibitem{yos.2}
H.~Izumi and M.~Yoshida, \emph{On Finsler spaces of perpendicular
  scalar curvature}, Tensor, N. S., \textbf{32} (1978), 219--224.


\bibitem{r29}
M.~Matsumoto, \emph{On Finsler spaces with curvature tensors of some
  special forms}, Tensor, N. S., \textbf{22} (1971), 201--204.

\bibitem{r34}
M.~Matsumoto, \emph{Foundations of Finsler geometry and special
  Finsler spaces}, Kaiseisha Press, Otsu, Japan, 1986.



\bibitem{r42}
H.~Rund, \emph{The differential geometry of Finsler spaces},
  Springer-Verlag, Berlin, 1959.

\bibitem{sak.}
T. Sakaguchi, \emph {On Finsler spaces of scalar curvature}, Tensor,
N. S., \textbf{38} (1982), 211--219.

\bibitem{Funk-metric}
Z. Shen, \emph{Differential geometry of spray and Finsler spaces}, Springer Science, Business Media, 2013.


\bibitem{amr3} A. Soleiman: \emph{Recurrent Finsler manifolds under projective change},
 Int. J. Geom. Meth. Mod. Phys.,  \textbf{13}, 10 (2016), 1650126 (10 pages).

\bibitem{r48}
A.~A. Tamim, \emph{Special Finsler manifolds}, J. Egypt. Math. Soc.,
  \textbf{10(2)} (2002), 149--177.


\bibitem{yos.1}
M. Yoshida, \emph {On Finsler spaces of $H_{p}$-scalar curvature},
Tensor, N. S., \textbf{38} (1982), 205--210.


\bibitem{CFG}
Nabil~L. Youssef and S. G. Elgendi, \emph{New Finsler   package},  Comput. Phys. Commun., \textbf{185}, 3 (2014), 986-997.
    ArXiv: 1306.0875  [math. DG].



\bibitem{sca.}
Nabil~L. Youssef and A.~Soleiman, \emph{Characterizations of  Finsler spaces
of scalar curvature}, submitted.  arXiv: 1405.1556v1 [math. DG].

\bibitem{r92}
Nabil~L. Youssef, S.~H. Abed and A.~Soleiman, \emph{Cartan and
  Berwald connections in the pullback formalism}, Algebras,
  Groups and Geometries, \textbf{25}, 4 (2008), 363--386.  arXiv: 0707.1320 [math. DG].

\bibitem{r86}
\bysame, \emph{A global approach to the
  theory of special Finsler manifolds},
   J. Math. Kyoto Univ., \textbf{48}, 4 (2008), 857--893.
     arXiv: 0704.0053 [math. DG].


\bibitem{r96}
\bysame, \emph{Geometric objects associated with the fundumental
connections in Finsler geometry},   J. Egypt. Math. Soc.,
\textbf{18}, 1 (2010), 67--90.   arXiv: 0805.2489 [math.DG].





\end{thebibliography}
\end{document}